\def\@logofont{\footnotesize}
\def\@setaddresses{\par
  \nobreak \begingroup
  \footnotesize
  \def\author##1{\nobreak\addvspace\bigskipamount}%
  \def\\{\par\nobreak}%
  \interlinepenalty\@M
  \def\address##1##2{\begingroup
    \par\addvspace\bigskipamount\indent
    \@ifnotempty{##1}{(\ignorespaces##1\unskip) }%
    {\scshape\ignorespaces##2}\par\endgroup}%
  \def\curraddr##1##2{\begingroup
    \@ifnotempty{##2}{\nobreak\indent\curraddrname
      \@ifnotempty{##1}{, \ignorespaces##1\unskip}\/:\space
      ##2\par}\endgroup}%
  \def\email##1##2{\begingroup
    \@ifnotempty{##2}{\nobreak\indent\emailaddrname
      \@ifnotempty{##1}{, \ignorespaces##1\unskip}\/:\space
      \ttfamily##2\par}\endgroup}%
  \def\urladdr##1##2{\begingroup
    \def~{\char`\~}%
    \@ifnotempty{##2}{\nobreak\indent\urladdrname
      \@ifnotempty{##1}{, \ignorespaces##1\unskip}\/:\space
      \ttfamily##2\par}\endgroup}%
  \addresses
  \endgroup
}
\renewcommand*\subjclass[2][2010]{%
  \def\@subjclass{#2}%
  \@ifundefined{subjclassname@#1}{%
    \ClassWarning{\@classname}{Unknown edition (#1) of Mathematics
      Subject Classification; using '2000'.}%
  }{%
    \@xp\let\@xp\subjclassname\csname subjclassname@#1\endcsname
  }%
}
\def\BState{\State\hskip-\ALG@thistlm}
\theoremstyle{plain}
\newtheorem{theorem}{Theorem}[section]
\newtheorem{proposition}[theorem]{Proposition}
\newtheorem{lemma}[theorem]{Lemma}
\newtheorem{question}[theorem]{Question}
\theoremstyle{definition}
\newtheorem{definition}[theorem]{Definition}
\newtheorem{example}[theorem]{Example}
\newtheorem{remark}[theorem]{Remark}
\def\supp{\mathrm{supp}}
\begin{document}
\title[Matchings in abelian groups and vector subspaces]{Results and questions on  matchings in abelian groups and vector subspaces of fields}

\author[M. Aliabadi]{Mohsen Aliabadi}
\address{Mohsen Aliabadi \\ 
Department of Mathematics\\ Iowa State University}%
\email{aliabadi@iastate.edu}

\author[K. Filom]{Khashayar Filom}
\address{Khashayar Filom\\ 
Department of Mathematics\\ University of Michigan}%
\email{filom@umich.edu}

\thanks{Keywords and phrases. acyclic matching, field extension, primitive subspace, weak acyclic matching property.}
\thanks{2020 Mathematics Subject Classification. Primary: 05D15; Secondary: 11B75, 20D60, 12F10, 05C25.}

\begin{abstract}
A matching from a finite subset $A$ of an abelian group to another subset $B$ is a bijection $f:A\rightarrow B$ with the property that $a+f(a)$ never lies in $A$. 
A matching is called acyclic if it is uniquely determined by its multiplicity function. 
Motivated by a question of E. K. Wakeford on canonical forms for symmetric tensors, the study of matchings and acyclic matchings in abelian groups was initiated by  C. K. Fan and J. Losonczy in \cite{MR1371651,MR1618439}, and was later generalized to the context of vector subspaces in a  field extension \cite{MR2735391,MR3393940}.
We discuss the acyclic matching  and weak acyclic matching properties and we provide results on the existence of acyclic matchings in finite cyclic groups. As for field extensions, we completely classify field extensions with the linear acyclic matching property. The analogy between matchings in abelian groups and in field extensions is highlighted throughout the paper and numerous open questions are presented for further inquiry.
\end{abstract}

\maketitle

\section{Introduction}
The notion of matchings in abelian groups was introduced by Fan and Losonczy in \cite{MR1371651} in order to generalize a geometric property of lattices in Euclidean space.   The study of acyclic matchings was motivated by an old problem of Wakeford concerning  finding sets of monomials which are removable from a generic homogeneous polynomial through a linear change of coordinates \cite{MR1576066}. This linear algebra question motivated C. K. Fan and J. Losonczy to define the concept of acyclic matchings in $\mathbb{Z}^n$ in \cite{MR1371651} which was later generalized to abelian groups by the latter author \cite{MR1618439}.
Matchings have been investigated for non-abelian groups as well \cite{MR2388613}, but we solely work with abelian groups. Throughout this paper,  $G$ denotes an additive abelian group. 
\begin{definition}\label{main definition}
Let $B$ be a finite subset of  $G$ which does not contain the neutral element. For any subset $A$ in $G$ with the same cardinality as $B$, a {\it matching} from $A$ to $B$ is defined to be a bijection $f:A\to B$ such that for any $a\in A$ we have $a+f(a)\not\in A$. For any matching $f$ as above, the associated \textit{multiplicity function} $m_f:G\to \mathbb{Z}_{\geq0}$ is defined via the rule:
\begin{equation}\label{main criterion}
\forall x\in G,\quad m_ f(x)=\#\{a\in A\,:\, a+ f(a)=x\}.
\end{equation}
A matching $ f:A\to B$ is called {\it acyclic} if for any matching $g:A\to B$, $m_f=m_g$ implies $f=g$.
\end{definition}
In view of Definition \ref{main definition}, a natural question to ask is whether two  finite subsets $A$ and $B$ of $G$ satisfying  $\#A=\#B$ and $0\notin B$ can be matched or be acyclically matched, i.e. is there a matching or an acyclic matching from $A$ onto $B$?
It is known that there exists a matching $f:A\rightarrow B$ if $A=B$, if every element of $B$ is a generator of $G$, or if $G$ is torsion-free \cite{MR1618439}. The latter result in particular implies that a torsion-free abelian group $G$ possesses the \textit{matching property}: For any two subsets $A$ and $B$ as in Definition \ref{main definition}, there exists a matching $f:A\rightarrow B$. In \cite{MR1618439}, Losonczy proves that abelian groups with the matching property are precisely those that are either torsion-free or cyclic of prime order; namely, groups that do not possess any non-trivial proper finite subgroup. 
Indeed, torsion-free abelian groups admit the stronger \textit{acyclic matching property} in the sense that for any $A$ and $B$ of the same cardinality with $0\notin B$, there exists an acyclic matching $ f:A\to B$ \cite{MR1409421,MR1618439}. The situation for groups $\Bbb{Z}/p\Bbb{Z}$ of prime orders is more subtle. Paper \cite{MR3393940} shows that
for primes $p$ with $p\equiv -1 \pmod{8}$ the group $\Bbb{Z}/p\Bbb{Z}$ does not have the acyclic matching property via exhibiting an explicit subset of $\Bbb{Z}/p\Bbb{Z}$ that does not admit any acyclic matching onto itself. Based on experimental evidence, it is conjectured in \cite{MR3991937} that $\Bbb{Z}/p\Bbb{Z}$ does not admit the acyclic matching property for any prime $p>5$. We shall prove the following theorems on the existence of matchings between certain subsets of a cyclic group of prime order. 
\begin{theorem}\label{main1}
Let $A$ be subsets of the cyclic group $\Bbb{Z}/p\Bbb{Z}$ where $p$ is a prime number. Suppose $A$ satisfies $A\cap 2A=\emptyset$ and is of size $k$ where $k.2^{k-1}<p$.\footnote{For any integer $m$, $mA$ denotes the subset $\{ma\,:\, a\in A\}$.} 
Then $A$ is acyclically matched to itself via the identity map.  
\end{theorem}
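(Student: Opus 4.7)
The plan is to argue by contradiction: assume $g:A\to A$ is a matching with $m_g=m_{\mathrm{id}}$ but $g\ne\mathrm{id}$, and derive a violation of $k\cdot 2^{k-1}<p$. Since $p$ is odd (a consequence of $k\cdot 2^{k-1}\ge 2$), the multiset $\{2a\}_{a\in A}$ consists of $k$ distinct elements, so $m_g=m_{\mathrm{id}}$ is equivalent to the map $a\mapsto a+g(a)$ being a bijection $A\to 2A$. Setting $\sigma(a)=(a+g(a))/2$ then defines another bijection $\sigma:A\to A$ satisfying $g(a)=2\sigma(a)-a$, and the identity matching corresponds to $\sigma=\mathrm{id}$; the task thus reduces to ruling out any other such $\sigma$.

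I would next analyse a non-trivial $g$-cycle $(b_0,\ldots,b_{m-1})$ of length $m\ge 2$ and set $c_i=\sigma(b_i)=(b_i+b_{i+1})/2\in A$. Bijectivity of $\sigma$ forces the $c_i$'s to be distinct; in particular $m=2$ is excluded (it would give $c_0=c_1$), so $m\ge 3$. Summing $2c_i=b_i+b_{i+1}$ around the cycle yields $\sum_{i=0}^{m-1}b_i=\sum_{i=0}^{m-1}c_i$, so the $m$-element sets $B=\{b_0,\ldots,b_{m-1}\}$ and $C=\{c_0,\ldots,c_{m-1}\}$ are subsets of $A$ with equal sum. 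Either $B=C$, in which case writing $c_j=b_{\rho(j)}$ produces a permutation $\rho$ of $\{0,\ldots,m-1\}$ with $b_j+b_{j+1}=2b_{\rho(j)}$ in which the forbidden values $\rho(j)\in\{j,j+1\}$ (which would force $b_j=b_{j+1}$) together with a short case analysis of the remaining possibilities eliminate this alternative for every $p$ larger than a small constant; or $B\ne C$, in which case $A$ contains two distinct $m$-subsets with equal sum.

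The main obstacle is to convert the ``$B\ne C$'' case into a contradiction with $k\cdot 2^{k-1}<p$. The equal-sum identity $\sum_{b\in B\setminus C}b=\sum_{c\in C\setminus B}c$ can always be rearranged into the form $a=\sum_{b\in S^+}b-\sum_{b\in S^-}b$ for some distinguished element $a\in A$ and a partition of a subset of $A\setminus\{a\}$ into $S^+\sqcup S^-$. The number of such candidate identities is bounded by $k\cdot 2^{k-1}$---there are $k$ choices for $a$ and $2^{k-1}$ sign assignments on the remaining $k-1$ elements---and the hypothesis $A\cap 2A=\emptyset$ rules out the shortest among them. I expect a pigeonhole argument on residues modulo $p$, combined with a careful lift of $A$ to $\{1,\ldots,p-1\}$ that controls the sizes of the signed sums, to rule out the remaining identities whenever $p>k\cdot 2^{k-1}$. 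Carrying out this pigeonhole rigorously, so that the numerical hypothesis actually forbids every candidate identity, is the delicate step on which the proof hinges.
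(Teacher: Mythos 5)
Your reduction to a bijection $\sigma:A\to A$ with $2\sigma(a)=a+g(a)$ and the cycle analysis are fine as far as they go, but the step on which you say the proof hinges cannot be carried out, because the statement you are trying to establish there is false. In the ``$B\ne C$'' branch you propose to derive a contradiction by showing that, under $k\cdot 2^{k-1}<p$ and $A\cap 2A=\emptyset$, the set $A$ admits no identity of the form $a=\sum_{b\in S^+}b-\sum_{b\in S^-}b$ (equivalently, no two distinct subsets of $A$ with equal sums). No pigeonhole can deliver this: the hypotheses do not forbid such identities. For instance $A=\{1,3,4\}\subset\Bbb{Z}/13\Bbb{Z}$ satisfies $A\cap 2A=\emptyset$ and $3\cdot 2^{2}=12<13$, yet $4=1+3$, so $\{1,3\}$ and $\{4\}$... more to the point $\{1,4\}$-type equal-sum coincidences of the kind you want to exclude do occur while the theorem still holds (for this $A$ one checks directly that the identity is the only matching with its multiplicity function). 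Counting the number of candidate identities (which, incidentally, is $k\cdot 3^{k-1}$ rather than $k\cdot 2^{k-1}$, since an element of $A\setminus\{a\}$ may also be absent) bounds how many linear conditions a \emph{generic} $A$ must avoid; it says nothing about a fixed $A$ that happens to satisfy one of them. To make the $B\ne C$ case work you would have to retain and exploit the full structure $2c_i=b_i+b_{i+1}$ with $\sigma$ a global bijection, not just the equal-sum consequence.

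The ``$B=C$'' branch is also not actually proved: the system $b_j+b_{j+1}=2b_{\rho(j)}$ over $\Bbb{F}_p$ with $\rho(j)\notin\{j,j+1\}$ is precisely the hard case, and it cannot be dismissed by ``a short case analysis\dots for every $p$ larger than a small constant'' --- the threshold genuinely depends on $k$, which is exactly why the hypothesis $k\cdot 2^{k-1}<p$ appears. For comparison, the paper handles both branches at once by a single linear-algebra argument: the relation $a_{\lambda(i)}+a_{\sigma(\lambda(i))}=2a_i$ puts the vector of elements of $A$ in the kernel of $2\mathrm{I}_k-\mathrm{P}_\lambda-\mathrm{P}_{\sigma\circ\lambda}$; over $\Bbb{R}$ this kernel is the line spanned by $(1,\dots,1)$ (using the ordering of $\Bbb{R}$), the coefficient of $t$ in the characteristic polynomial is a nonzero integer of absolute value at most $k\cdot 2^{k-1}$ by a determinant bound, hence it stays nonzero modulo $p$, so the kernel over $\Bbb{F}_p$ is still one-dimensional and contains no vector with distinct entries. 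Some argument of this global, quantitative kind is what your outline is missing.
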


\begin{theorem}\label{main2}
Let $p$ be a prime number and suppose $A$ and $B$ are finite subsets of $\Bbb{Z}/p\Bbb{Z}$ with $0\notin B$ which are of the same size k. If 
$k\leq\sqrt{\log_2p}-1$, then there exists an acyclic matching $f:A\rightarrow B$.
\end{theorem}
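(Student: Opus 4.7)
The strategy is to reduce to the torsion-free case: since, as recalled in the introduction, $\mathbb{Z}$ has the acyclic matching property \cite{MR1409421, MR1618439}, the plan is to choose a nonzero multiplier $c \in \mathbb{Z}/p\mathbb{Z}$ that simultaneously compresses $A$ and $B$ into a narrow window, lift this pair to $\mathbb{Z}$, produce an acyclic matching there, and reduce back modulo $p$.

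\emph{Step 1: Diophantine compression.} Enumerate $A \cup B = \{x_1, \ldots, x_N\}$ with $N \leq 2k$, and partition $\mathbb{Z}/p\mathbb{Z}$ into $Q$ arcs of length about $p/Q$. The map $c \mapsto (cx_1, \ldots, cx_N) \in (\mathbb{Z}/p\mathbb{Z})^N$ takes $\mathbb{Z}/p\mathbb{Z}$ into at most $Q^N$ boxes; as soon as $Q^N < p$, pigeon-hole yields distinct $c_1, c_2$ in a common box, so $c := c_1 - c_2$ is a nonzero element of $\mathbb{Z}/p\mathbb{Z}$ whose signed representative of $cx_i$ has absolute value at most $p/Q$ for every $i$. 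With $Q = 4$, the pigeon-hole threshold $4^{2k} \leq p$ is comfortably guaranteed by $p \geq 2^{(k+1)^2}$, since $(k+1)^2 \geq 4k$.

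\emph{Step 2: Match in $\mathbb{Z}$ and descend.} Let $\tilde{A}, \tilde{B} \subset (-p/4, p/4) \cap \mathbb{Z}$ be the signed integer representatives of $cA$ and $cB$; then $|\tilde{A}| = |\tilde{B}| = k$ and $0 \notin \tilde{B}$ (because $0 \notin B$). Invoking the acyclic matching property of $\mathbb{Z}$ produces an acyclic matching $\tilde{f}: \tilde{A} \to \tilde{B}$, and I let $f_c: cA \to cB$ be its reduction modulo $p$. For the matching condition: every $\tilde{a} + \tilde{f}(\tilde{a})$ lies in $(-p/2, p/2)$, and its difference with any $\tilde{a}' \in \tilde{A}$ lies in $(-3p/4, 3p/4) \subset (-p, p)$, so reduction modulo $p$ is injective on this set and $\tilde{a}+\tilde{f}(\tilde{a})\neq\tilde{a}'$ in $\mathbb{Z}$ is equivalent to the same relation in $\mathbb{Z}/p\mathbb{Z}$. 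For acyclicity: any competing matching $g_c: cA \to cB$ with $m_{g_c} = m_{f_c}$ lifts (uniquely, by the size constraint) to a $\mathbb{Z}$-matching $\tilde{g}$ with $m_{\tilde{g}} = m_{\tilde{f}}$, whence $\tilde{g} = \tilde{f}$ by acyclicity of $\tilde{f}$, forcing $g_c = f_c$.

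\emph{Step 3: Undo the rescaling.} Setting $f(a) := c^{-1} f_c(ca)$ gives a bijection $A \to B$. Both the matching condition and the multiplicity function transform equivariantly under the dilation $x \mapsto cx$, so $f$ is an acyclic matching from $A$ onto $B$. The main obstacle is Step 1: producing the multiplier $c$ that collapses $A \cup B$ into a small window of $\mathbb{Z}/p\mathbb{Z}$ is precisely where the bound $(k+1)^2 \leq \log_2 p$ is used, via an $N=2k$-dimensional pigeon-hole; once $c$ is in hand, Steps 2 and 3 are essentially formal consequences of the comparison between $\mathbb{Z}$-arithmetic and $\mathbb{Z}/p\mathbb{Z}$-arithmetic on a sufficiently small window.
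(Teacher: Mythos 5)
Your proposal is correct, and although it shares the paper's underlying philosophy of ``rectifying'' the problem into $\mathbb{Z}$, the implementation is genuinely different. The paper applies Lev's sharp rectification theorem \cite{MR2452847} to the set $X=(A+B)\cup A\cup B\cup\{0\}$ --- whose size is at most $(k+1)^2$, which is exactly where the hypothesis $k\le\sqrt{\log_2 p}-1$ enters --- obtaining an injection $\varphi:X\hookrightarrow\mathbb{Z}$ that preserves relations $x+y=z+w$; since $\varphi$ is not induced by a group homomorphism, a matching cannot simply be transported along it, so the paper re-runs the greedy construction of \cite{MR1409421,MR1618439} with respect to the ordering induced by $\varphi$ and re-verifies acyclicity by hand. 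You instead rectify by a dilation found via an elementary Dirichlet pigeonhole in $(\mathbb{Z}/p\mathbb{Z})^{N}$ with $N=\#(A\cup B)\le 2k$; because a dilation is a group automorphism and the compressed lifts sit in $(-p/4,p/4)$, all relevant sums lie in $(-p/2,p/2)$, all relevant differences lie in $(-p,p)$, and both the matching condition and the multiplicity function transfer back and forth formally, so the acyclic matching property of $\mathbb{Z}$ can be invoked as a black box. Two small points of hygiene: take the arcs of length $\lceil p/4\rceil$, so that same-arc differences have absolute value at most $\lceil p/4\rceil-1<p/4$ \emph{strictly} (this is what validates the three interval inclusions you use), and note that the degenerate cases $k=0$ and $2^{4k}=p$ are respectively vacuous and impossible for an odd prime. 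Finally, it is worth recording that your pigeonhole only needs $4^{2k}<p$, i.e.\ $k<\tfrac14\log_2 p$, which is a weaker requirement than $(k+1)^2\le\log_2 p$ for every $k\ge 2$; your argument therefore proves a strictly stronger statement with a linear rather than square-root threshold in $\log_2 p$, and in particular it would show that the exponent in Question \ref{Mersenne} satisfies $\epsilon\ge 1$, matching the conjectural upper bound coming from Mersenne primes.
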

\noindent The theorems will be established in \S2; the proof of the first one is based on a linear algebra argument while the second one utilizes a result from additive number theory.  

The condition $A\cap 2A=\emptyset$ in Theorem \ref{main1} is necessary for ${\rm{id}}:A\rightarrow A$ to be a matching. In general, all bijections $f:A\rightarrow B$ are matchings provided that $A\cap (A+B)=\emptyset$.\footnote{The sumset $A+B$ is defined as $\{a+b\,:\, a\in A \text{ and }b\in B\}$.} The profusion of matchings then can possibly imply the existence of an acyclic matching from $A$ to $B$. 
\begin{question}\label{main question}
Let $A,B$ be subsets of the cyclic group $\Bbb{Z}/p\Bbb{Z}$ where $p$ is a prime number. Suppose $A$ and $B$ are of the same size $k$. Does the condition $A\cap(A+B)=\emptyset$ guarantee the existence of an acyclic matching $f:A\rightarrow B$? 
\end{question}
\noindent A partial answer will be provided in Proposition \ref{Answer}. 

In view of the discussion above, an abelian group $G$ is said to admit the \textit{weak acyclic matching property} if there exists an acyclic matching between any two subsets $A$ and $B$ of $G$ that have the same cardinality and satisfy $A\cap (A+B)=\emptyset$. Any cyclic group $\Bbb{Z}/n\Bbb{Z}$ of order smaller than $23$ satisfies the weak acyclic matching property, but the existence of infinitely many cyclic groups  $\Bbb{Z}/p\Bbb{Z}$ of prime order with this property is an open question \cite{MR3991937}. 

The investigation of matchings in abelian groups has an enumerative aspect as well. Paper \cite{MR2847271} for instance provides a lower bound for the number of matchings $A\rightarrow B$ under an assumption on $B$. Using a graph-theoretical interpretation of matchings, in \S2.2 we exhibit bounds on the number of matching $A\rightarrow B$ by invoking some classical results from the theory of permanents; see Proposition \ref{inequality proposition}.   

Given a field extension $L/F$, an analogous notion of matching between two $F$-subspaces of $L$ is developed by Eliahou and Lecouvey in \cite{MR2735391}. 
\begin{definition}\label{linear definition}
Let $A$ and $B$ be two $k$-dimensional $F$-subspaces of $L$. An ordered basis $\mathcal{A}=\{a_1,\ldots,a_k\}$ of $A$ is said to be \textit{matched} to an ordered basis  $\mathcal{B}=\{b_1,\ldots,b_k\}$ of $B$ if 
\begin{equation}\label{linear criterion}
a^{-1}_iA\cap B\subseteq \langle b_1,\ldots,b_{i-1},b_{i+1},\ldots,b_k\rangle
\end{equation}
for each $1\leq i\leq k$. We say that $A$ is matched to $B$ (or $A$ is \textit{matchable} to $B$) if every ordered basis $\mathcal{A}$ of $A$ can be matched to an ordered basis $\mathcal{B}$ of $B$.
\end{definition}
\noindent
To see the analogy, notice that if \eqref{linear criterion} is satisfied, then no $a_ib_i$ can lie in $B=\langle\mathcal{B}\rangle$ and thus, in the multiplicative group $L^\times$, $a_i\mapsto b_i$ defines a matching
$\mathcal{A}\rightarrow\mathcal{B}$  in the sense of Definition \ref{main definition}. One can easily check that having \eqref{linear criterion} for all $i\in\{1,\dots,k\}$ implies 
\begin{equation}\label{intersection criterion}
\dim_F\bigcap_{i\in J}\left(a_i^{-1}A\cap B\right)\leq k-\# J
\end{equation}
for any $J\subseteq\{1,\dots,k\}$. In particular, setting $J=\{1,\dots,k\}$, the subspace 
$\bigcap_{i=1}^k\left(a_i^{-1}A\cap B\right)$
must be trivial which cannot happen if $1\in B$. This brings us to the linear analogue of the matching property in groups.
\begin{definition}\label{linear matching}
A field extension $L/F$ has the \textit{linear matching property} if  every finite-dimensional $F$-subspace $A$ is matched to any other subspace $B$ of $L$ which is of the same dimension and satisfies $1\notin B$.  
\end{definition}
\noindent
Similar to the result from \cite{MR1618439} mentioned above, an extension $L/F$ has the matching property if there is no finite intermediate extension $E/F$ with $E\neq F,L$ \cite{MR2735391}.\footnote{There is a slight gap in the statement of \cite[Theorem 2.6]{MR2735391} which is corrected in \cite{2012arXiv1208.2792E}. The classification of  field extensions with the linear matching property that we mentioned is based on 
\cite[Theorem 2.6]{2012arXiv1208.2792E}.}
Also in the group-theoretic context, we mentioned that if every element of $B$ is a generator of $G$, then there exists a matching $A\rightarrow B$ \cite[Proposition 3.4]{MR1618439}. A similar result has been established in the linear setting: Given a finite field extension $L/F$, two $F$-subspaces $A$  and $B$ of the same dimension are matchable if $B$ is a \textit{primitive} $F$-subspace of $L$ \cite[Theorem 4.2]{MR3723776}. Recall that $B$ is called primitive if $F(\alpha)=L$ for each $\alpha\in B\setminus \{0\}$. We shall show the following regarding the largest possible dimension of a primitive subspace.  
\begin{theorem}\label{primitive}
Let $L/F$ be a finite simple field extension. Then the largest possible dimension of a primitive $F$-subspace of $L$ is given by
$$
[L:F]-\max_{\stackrel{F\subseteq E\subsetneq L}{E \text{ a proper intermediate subfield}}}[E:F].
$$
\end{theorem}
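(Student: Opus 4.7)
The plan is to verify the formula through matching upper and lower bounds. Set $n=[L:F]$ and $d=\max\{[E:F] : F\subseteq E\subsetneq L\}$; the goal is to show the largest primitive $F$-subspace of $L$ has dimension $n-d$. For the upper bound, let $B\subseteq L$ be primitive and $E\subsetneq L$ a proper intermediate subfield. Every nonzero $\alpha\in B$ satisfies $F(\alpha)=L$, hence $\alpha\notin E$; so $B\cap E=\{0\}$. The Grassmann formula then gives $\dim_F(B+E)=\dim_F B+\dim_F E\le n$, whence $\dim_F B\le n-[E:F]$, and maximizing over $E$ yields $\dim_F B\le n-d$.

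For the lower bound, I need to exhibit a primitive subspace of dimension exactly $n-d$. Since $L/F$ is a finite simple extension, a classical theorem guarantees only finitely many intermediate subfields $F=E_0,E_1,\ldots,E_m\ne L$, each of $F$-dimension at most $d$. It suffices to produce an $F$-subspace $B$ of dimension $n-d$ with $B\cap E_i=\{0\}$ for every $i$, as such a $B$ is automatically primitive. I would build a basis $b_1,\ldots,b_{n-d}$ of $B$ element by element, maintaining inductively that $\langle b_1,\ldots,b_k\rangle\cap E_i=\{0\}$ for every $i$. At step $k+1$, the forbidden locus for $b_{k+1}$ is the union $\bigcup_{i}\big(E_i+\langle b_1,\ldots,b_k\rangle\big)$ of finitely many proper $F$-subspaces of $L$, each of $F$-dimension at most $d+k\le n-1$.

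Over an infinite base field $F$, a vector space is never a finite union of proper subspaces, so a valid $b_{k+1}$ always exists and the inductive construction goes through. Over $F=\mathbb{F}_q$, the forbidden locus contains at most $m\cdot q^{n-1}$ elements of $L$, leaving room when $q>m$; but the main obstacle I anticipate is the remaining small-$q$ regime, where this naive union bound fails. There one should replace the crude bound by a finer count via Gaussian binomial coefficients (counting $(n-d)$-dimensional subspaces meeting each $E_i$ trivially), or give an explicit construction of $B$ exploiting the cyclic Galois structure of $\mathbb{F}_{q^n}/\mathbb{F}_q$, for instance as the image of a carefully chosen $F$-linear map whose nonzero outputs all have minimal polynomial of degree exactly $n$.
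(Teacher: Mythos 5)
Your upper bound and your treatment of the infinite-field case are correct and coincide with the paper's argument: the paper's Lemma 3.3 proves exactly your greedy construction, using the fact that a vector space over $F$ cannot be covered by $m\le\#F$ proper subspaces, so the inductive choice of $b_{k+1}$ outside $\bigcup_i\bigl(E_i+\langle b_1,\dots,b_k\rangle\bigr)$ goes through whenever $F$ is infinite (or, more generally, whenever the number of relevant subfields does not exceed $\#F$).

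However, the finite-field case with small $q$ is a genuine gap, not a loose end: you correctly identify that the union bound fails there (e.g.\ $\mathbb{F}_{2^{30}}/\mathbb{F}_2$ has three maximal proper subfields, exceeding $q=2$), but neither of your two suggested remedies is carried out, and the first one is doubtful. A Gaussian-binomial count of subspaces meeting each $E_i$ trivially runs into the same inclusion--exclusion difficulty that defeats the union bound; there is no obvious way to control the overlaps of the forbidden loci uniformly in $q$. The second suggestion --- exploiting the cyclic Galois structure --- is indeed the right idea and is what the paper does, but the actual construction requires substantive work that your sketch does not supply. The paper's Lemma 3.4 first reduces to the case where $n$ is squarefree (by replacing $F$ with the intersection of the maximal subfields), then takes a normal basis $\{\sigma^j(\theta)\}_{j\in\mathbb{Z}/n\mathbb{Z}}$ and seeks $W=\langle\{\sigma^j(\theta)\}_{j\in T}\rangle$ for a subset $T\subseteq\mathbb{Z}/n\mathbb{Z}\cong\mathbb{Z}/p_1\mathbb{Z}\times\dots\times\mathbb{Z}/p_s\mathbb{Z}$ of size $n-n/p_1$. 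The key observation is that $W$ meets $\mathbb{F}_{q^{n/p_i}}$ nontrivially only if $T$ contains a full coset of $\langle n/p_i\rangle$, i.e.\ a fiber of the projection away from the $i$-th coordinate; one then exhibits the complement $T^c$ of size $p_2\cdots p_s$ meeting every such fiber, via surjections $f_i:\mathbb{Z}/p_i\mathbb{Z}\to\mathbb{Z}/p_1\mathbb{Z}$ and the set of tuples $\bigl(\sum_{i\ge2}f_i(j_i),j_2,\dots,j_s\bigr)$. Without some construction of this kind (or an equivalent one), your proof establishes the theorem only for infinite $F$ and for finite $F$ with $q$ large relative to the number of maximal subfields.
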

\noindent
A proof will appear in \S3 after a review of linear matchings. Indeed, motivated by \cite{MR2735391}, paper \cite{MR3393940} develops a notion of when $A$ can be \textit{acyclically} matched to $B$, hence a definition of the \textit{linear acyclic matching property} for field extensions. The former is relevant only when $A\cap AB=\{0\}$, compare with condition $A\cap(A+B)=\emptyset$ in 
Question \ref{main question} from the group-theoretic setting.   
Unlike the case of abelian groups, in the linear setting, the linear matching property for field extensions is equivalent to the acyclic one. In \S3, after reviewing the definition of the linear acyclic matching property, we shall prove the following by building on the arguments appeared in \cite{MR3393940}:
\begin{theorem}\label{linear theorem}
A field extension $L/F$ admits the linear acyclic matching property if and only if there is no finite intermediate extension $E/F$ with $E\neq F,L$.
\end{theorem}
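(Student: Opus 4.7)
The plan is to prove the two implications of the biconditional separately, following the strategy of the linear matching property classification of \cite{MR2735391, 2012arXiv1208.2792E} and adapting the acyclic arguments of \cite{MR3393940}.

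\textbf{Necessity.} Suppose for contradiction that $L/F$ admits the linear acyclic matching property yet there is a finite intermediate extension $F\subsetneq E\subsetneq L$, and put $d:=[E:F]$. Pick any $\beta\in L\setminus E$ and set $A=E$, $B=\beta E$. These are $d$-dimensional $F$-subspaces of $L$, and $A\cap AB=E\cap \beta E=\{0\}$ because $\beta\notin E$. Thus the acyclic matching hypothesis would force an acyclic matching $\mathcal{A}\to\mathcal{B}$ to exist. I would then exploit the abundant multiplicative symmetry of the subfield $E$---each unit $u\in E^\times$ sends a matched pair of bases of $A$ and $B$ to another matched pair while preserving the linear multiplicity data---to exhibit two distinct matchings sharing the same data, contradicting acyclicity.

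\textbf{Sufficiency.} Conversely, assume no finite intermediate extension with $F\subsetneq E\subsetneq L$ exists. By \cite{2012arXiv1208.2792E} the linear matching property holds for $L/F$, and moreover every nonzero element of $L\setminus F$ generates $L$ over $F$. Given $F$-subspaces $A,B\subseteq L$ of common dimension $k$ with $A\cap AB=\{0\}$, I would prove the existence of an acyclic matching by induction on $k$, adapting the scheme of \cite[Theorem~3.3]{MR3393940}. The case $k=1$ is immediate. For the inductive step, start with any matching $\mathcal{A}\to\mathcal{B}$ supplied by the linear matching property; if it is not already acyclic, a second matching with identical multiplicity data exists, and comparing the two lets one peel off a matched pair and descend to a dimension-$(k-1)$ problem for residual subspaces $A',B'$. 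The primitivity of $B$ afforded by the hypothesis (\cite[Theorem~4.2]{MR3723776}) ensures $A'\cap A'B'=\{0\}$, so induction applies, and reassembling delivers the desired acyclic matching.

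\textbf{Main obstacle.} The principal difficulty will be the reduction step in the sufficiency direction: choosing which matched pair to strip off so that the residual subspaces still satisfy $A'\cap A'B'=\{0\}$ and have the correct dimension, and then verifying that extending an acyclic matching on $A',B'$ by this pair is itself acyclic on $A,B$. A secondary challenge on the necessity side is articulating precisely how the multiplicity function (to be formalized in \S3 following \cite{MR3393940}) interacts with the scaling symmetry of $E$ so as to guarantee two distinct matchings with coincident multiplicity data.
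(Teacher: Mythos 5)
Your necessity argument is essentially the paper's (Proposition \ref{Proposition 2}): take $A=E$, $B=\alpha E$ for some $\alpha\in L\setminus E$, note $A\cap AB=\{0\}$, and twist a given isomorphism $f$ by multiplication by an element of $E$, e.g. $g(a)=\beta^{-1}f(\beta^{-1}a)$, which is equivalent to $f$ via $\phi(a)=\beta a$. But be precise about the target: by Definition \ref{linear acyclic} you must produce, for \emph{every} isomorphism $f:A\to B$, an equivalent $g$ that is not of the form $cf$ with $c\in F$. ``Two distinct matchings sharing the same data'' is not enough, since $g=cf$ with $c\in F\setminus\{1\}$ is distinct from $f$ yet does not violate acyclicity; in particular units $u\in F^\times$ only ever produce such scalar multiples, so you must use $\beta\in E\setminus F$ (this is where $E\neq F$ enters) and then actually verify that the resulting $g$ is not an $F$-multiple of $f$ for the given $f$ --- this is the delicate point your sketch defers.

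The sufficiency direction has a genuine gap, and the proposed route is not the one that works. Acyclicity here (Definitions \ref{equivalent} and \ref{linear acyclic}) is a statement about linear isomorphisms $f,g:A\to B$ related by the global identity $af(a)=\phi(a)g(\phi(a))$ for an automorphism $\phi$ of $A$; there is no element-wise multiplicity function, so ``comparing the two matchings and peeling off a matched pair'' has no meaning: an equivalence $\phi$ need not preserve any chosen line of $A$ or any complement, so neither the descent to residual subspaces $(A',B')$ nor the claim that extending an acyclic matching on $(A',B')$ yields an acyclic matching on $(A,B)$ can be carried out. Moreover the primitivity you invoke is unavailable: the hypothesis does not imply that every element of $L\setminus F$ generates $L$ over $F$ (take $L=F(x,y)$, which has no proper finite intermediate extension yet contains $F(x)$), and \cite[Theorem 4.2]{MR3723776} concerns finite extensions, whereas the theorem covers infinite ones. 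The paper's argument is direct, not inductive: from $af(a)=\phi(a)g(\phi(a))$ one derives $(x\phi(a)-a\phi(x))(xg(\phi(a))-ag(\phi(x)))=0$ for all $a$, so $A$ is the union of two subspaces and hence equals one of them; either $\phi$ is multiplication by a scalar which the no-finite-intermediate-extension hypothesis forces to lie in $F$ (giving $g=cf$), or $g\circ\phi$ is multiplication by some $\alpha\in L\setminus\{0\}$ and $B=\alpha A$, in which case one checks separately that the multiplication map $a\mapsto\alpha a$ is itself acyclic. This dichotomy (Lemma \ref{lemma}) is the missing key idea; without it, or a workable substitute for the reduction step you flag as the main obstacle, the sufficiency half remains unproved.
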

\noindent
This generalizes \cite[Theorem 4.5]{MR3393940}. 

\subsection*{Outline}
We have devoted \S2 to matchings in the context of abelian groups and \S3 to linear matchings in the context of field extensions. In \S2.1, after a brief review  of the literature on matchings and acyclic matchings, we prove Theorem \ref{main1} and Theorem \ref{main2}, as well as Proposition \ref{Answer} that provides a partial answer to Question \ref{main question}. These results are all concerned with the existence of acyclic matchings. Some questions on counting the number of matchings are discussed in \S2.2. Proof of Theorem \ref{primitive} on primitive subspaces in a simple field extension appears in \S3.1. Finally, in \S3.2, 
we prove Theorem \ref{linear theorem} that characterizes field extensions with the linear acyclic matching property.

\section{Matchings in abelian groups}
\subsection{Acyclic matchings}
We begin with two examples of matchings and a related definition. 
\begin{definition}
The \textit{support} of a matching $f:A\rightarrow B$, denoted by $\supp(f)$, is the subset of elements $x\in A$ at which the multiplicity function $m_f:G\rightarrow\Bbb{Z}_{\geq 0}$ is positive; that is, the subset of elements $x\in G$ that may be realized as $a+f(a)$ for an element $a$ of $A$. 
\end{definition}

\begin{example}
Let us examine matchings $f:A\rightarrow B$ in which the subsets $A$ and $B$ of the abelian group $G$ are as large as possible. If $\#A=\#B=\#G-1$, then $B=G\setminus\{0\}$, and $f$ should 
be in the form of
$$
f:A=G\setminus\{g_1\}\rightarrow B=G\setminus\{0\}: a\mapsto g_1-a
$$
for an appropriate $g_1\in G$. The support of $f$ only contains $g_1$ at which $m_f$ takes the value $\#G-1$. This clearly shows that the matching above is acyclic.

Next, suppose $A$ and $B$ are of cardinality $\#G-2$. Write $A$ as $G\setminus\{g_1,g_2\}$ and $B$ as $G\setminus\{0,g_3\}$ where $g_1\neq g_2$ and $g_3\neq 0$. 
We claim that any matching $f:A\rightarrow B$ is acyclic. For any $a\in A$, $f(a)$ should be either $g_1-a$ or $g_2-a$ because otherwise $a+f(a)\in A$. Suppose the former occurs $l$ times and the latter $\#G-2-l$ times. 
Therefore, the multiplicity function $m_f:G\rightarrow\Bbb{Z}_{\geq 0}$ attains the value $l$ at $g_1$, the value $\#G-l-2$ at $g_2$, and is zero elsewhere. The knowledge of the subset $A':=\{a\in A\,:\, f(a)=g_1-a\}$ determines the matching. Notice that $(g_2-g_1)+A'\subset A'\cup\{g_1,g_2\}$ because otherwise there exists $a'\in A'$ with $a'':=(g_2-g_1)+a'\in A\setminus A'$ which is impossible since then $f(a')=g_1-a'$ coincides with $f(a'')=g_2-a''$. 
One can easily check that due to the inclusion $(g_2-g_1)+A'\subset A'\cup\{g_1,g_2\}$ and $g_1\notin A'$,  $A'$ should be a progression in the form of 
$\{g_1+(g_1-g_2),g_1+2(g_1-g_2),\dots, g_1+l(g_1-g_2)\}$.
\end{example}

\begin{example}
Let us consider a family of matchings $f:A\rightarrow B$ for which $A\cap (A+B)=\emptyset$ as in Question \ref{main question}. 
Imposing an extra condition that $B\cup\{0\}$ is a subgroup of $G$, we claim that any matching $f:A\rightarrow B$ is acyclic: Given $g:A\rightarrow B$ with $m_g=m_f$, any $a+f(a)$ should be realizable as $a'+g(a')$ too 
($a,a'\in A$). But then $a=a'+(g(a')-f(a))$ where $g(a')-f(a)\in B\cup\{0\}$. This contradicts $A\cap(A+B)=\emptyset$ unless $g(a')=f(a)$ which also implies $a=a'$.
\end{example}

We next turn into results concerned with the existence of matchings or acyclic matchings. As mentioned in the introduction, it is established in \cite{MR1618439} that an abelian group has the matching property if and only if it is torsion-free or cyclic of prime order. The proof therein utilizes Hall's marriage theorem and a result of Kneser on the size of sumsets in abelian groups. 

\begin{theorem}[\cite{MR1618439}]\label{t*}
Let $G$ be an abelian group which is either torsion-free or cyclic of prime order. Suppose $A$ and $B$ are two finite subsets of $G$ which are of the same size and $0\notin B$. Then there exists a matching $f:A\rightarrow B$. 
\end{theorem}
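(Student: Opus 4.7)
The plan is to encode matchings as perfect matchings of an auxiliary bipartite graph and then to verify a Hall-type condition via sumset inequalities. Form the bipartite graph $H$ with vertex classes $A$ and $B$, joining $a\in A$ to $b\in B$ exactly when $a+b\notin A$; a perfect matching of $H$ is the same data as a matching in the sense of Definition \ref{main definition}. By K\"onig's theorem, $H$ admits a perfect matching if and only if every vertex cover of $H$ has size at least $\#A$. Writing $B_S:=\{b\in B:a+b\in A\text{ for every }a\in S\}$, a short unpacking shows that this vertex-cover condition is equivalent to
\[
\#S+\#B_S\leq\#A\quad\text{for every }S\subseteq A.
\]

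The cases $S=\emptyset$ and $B_S=\emptyset$ are trivial, so assume both are nonempty; by construction $S+B_S\subseteq A$. When $G$ is torsion-free, the standard sumset estimate yields $\#(S+B_S)\geq\#S+\#B_S-1$. When $G=\Bbb{Z}/p\Bbb{Z}$, the same lower bound follows from the Cauchy--Davenport theorem: the hypothesis $0\notin B$ together with $\#A=\#B$ forces $\#A\leq p-1$, so that $\#(S+B_S)\leq\#A<p$ and the minimum in Cauchy--Davenport is attained by $\#S+\#B_S-1$. Combining with $\#(S+B_S)\leq\#A$, I obtain $\#S+\#B_S\leq\#A+1$, reducing the proof to ruling out the equality $\#S+\#B_S=\#A+1$.

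Under this putative equality the sumset estimate is saturated and $S+B_S=A$. The equality case of the torsion-free inequality forces $S$ and $B_S$ to be arithmetic progressions with a common difference $d$; in the cyclic case, Vosper's theorem yields the same conclusion once the peripheral sub-cases $\#S=1$, $\#B_S=1$, and $\#A=p-1$ are dispatched by hand. Writing $S=s_0+\{0,d,\dots,(\#S-1)d\}$ and $B_S=t_0+\{0,d,\dots,(\#B_S-1)d\}$, the identity $S+B_S=A$ together with $S\subseteq A$ pin $t_0$ down to the form $-jd$ for some $j\in\{0,1,\dots,\#B_S-1\}$, so that $0\in B_S$. This contradicts $B_S\subseteq B$ and $0\notin B$. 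The main obstacle is precisely this equality analysis in $\Bbb{Z}/p\Bbb{Z}$, where Vosper's theorem fails at $\#A=p-1$ and one must argue directly using that $B=\Bbb{Z}/p\Bbb{Z}\setminus\{0\}$ and $A$ is the complement of some element $a_0$; equality then forces $a_0\in S$, contradicting $S\subseteq A$. The bipartite-graph framework set up here will be reused in \S2.2 to bound the number of matchings via permanents.
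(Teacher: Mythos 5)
Your argument is correct, but it takes a genuinely different and noticeably heavier route than the paper's at exactly one point. Both proofs build the same bipartite graph and reduce, via K\"onig's theorem, to showing $\#S+\#B_S\leq\#A$ for every $S\subseteq A$ (in the paper's language: no independent set $A'\sqcup B'$ exceeds $\#A$). The paper then makes one small move that eliminates your entire equality analysis: since $0\notin B$, it replaces $B'=B_S$ by $B'':=B_S\sqcup\{0\}$, notes that $S+B''\subseteq A$ still holds, and applies Kneser's theorem --- which treats the torsion-free and prime-order cases uniformly, these being exactly the groups with no finite nontrivial proper subgroup --- to get $\#S+\#B_S=\#S+\#B''-1\leq\#(S+B'')\leq\#A$ outright. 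You instead bound $\#(S+B_S)$ without the augmentation, land on $\#S+\#B_S\leq\#A+1$, and are forced to rule out equality by an inverse theorem: Vosper in $\Bbb{Z}/p\Bbb{Z}$ together with the corresponding arithmetic-progression characterization in the torsion-free case, plus three boundary sub-cases. Your equality analysis does go through: the claim that $t_0=-jd$ with $0\leq j\leq\#B_S-1$, hence $0\in B_S$, is correct, though in $\Bbb{Z}/p\Bbb{Z}$ it needs the extra observation that the progression $A$ has length at most $p-2$ and so the interval containment cannot wrap around; and your direct treatment of $\#A=p-1$ is right. Be aware, however, that the ``equality case of the torsion-free inequality'' you invoke is itself a structure statement requiring justification in an arbitrary torsion-free abelian group (not just $\Bbb{Z}$), e.g.\ via a compatible total ordering or the Kemperman structure theorem. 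What each approach buys: yours makes the extremal configurations visible and shows precisely how $0\notin B$ obstructs them; the paper's shows that adjoining the neutral element to the $B$-side is where that hypothesis earns its keep, converting the whole extremal analysis into a one-line inequality.
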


The preceding theorem indicates that an abelian group which is either torsion-free or of  prime order has the matching property. As for the stronger acyclic matching property, torsion-free abelian groups admit the latter property as established in \cite{MR1618439} whereas there exist infinitely many primes $p$ for which $\Bbb{Z}/p\Bbb{Z}$ does not possess the acyclic matching property. 

\begin{theorem}[\cite{MR3393940}]\label{Jafari}
There are infinitely many primes $p$ for which $\Bbb{Z}/p\Bbb{Z}$ does not satisfy the acyclic matching property. 
\end{theorem}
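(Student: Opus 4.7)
The plan is to invoke \cite[Lemma 2.1 and Proposition 2.3]{MR3393940} for a family of primes strictly larger than $\{p \equiv -1 \pmod{8}\}$. Those two cited results produce, under a certain algebraic hypothesis on $p$, a subset $A \subset \Bbb{Z}/p\Bbb{Z}$ together with two distinct matchings $A \to A$ having the same multiplicity function, and then a rigidity argument that rules out any acyclic matching $A \to A$ at all (not merely these two). My strategy is to verify this algebraic hypothesis for a broader family of primes than the one considered in \cite{MR3393940}.

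First, I would isolate the minimal input required by \cite[Lemma 2.1 and Proposition 2.3]{MR3393940}. I expect it to amount to the existence of certain auxiliary elements of $\Bbb{F}_p^\times$ --- for instance, a suitable element $r$ together with a subgroup $H$ of $\Bbb{F}_p^\times$ such that $r$, $1+r$, and $2$ satisfy prescribed membership relations in $H$ --- from which the set $A$ is constructed as a union of $H$-cosets and the two competing matchings are given by $a\mapsto a$ and $a \mapsto ra$. Second, I would observe that $p \equiv -1 \pmod{8}$ is a convenient way to realize these relations for $H$ the subgroup of quadratic residues, but far from the only way: by replacing the role of $2$ by other small integers and the role of the squares by higher-order residue subgroups, further congruence classes of primes also realize the hypothesis. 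A short case analysis modulo a fixed modulus produces a finite union of arithmetic progressions properly containing $\{p \equiv -1 \pmod{8}\}$ on which the construction still works. Finally, an application of Dirichlet's theorem on primes in arithmetic progressions shows that this enlarged family is infinite.

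The main obstacle is the case analysis in the second step: for each residue class newly included in the family, one must check both that the set $A$ built from the new auxiliary data admits the required pair of distinct matchings with identical multiplicity functions (so that the identity is not acyclic), and that the rigidity step of \cite[Proposition 2.3]{MR3393940} --- which upgrades ``the distinguished matching is not acyclic'' to ``no matching on $A$ is acyclic'' --- continues to apply without any tacit dependence on the specific congruence $p \equiv -1 \pmod{8}$.
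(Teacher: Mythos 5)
Your proposal is a plan rather than a proof: every step carrying the mathematical content is deferred. You guess at what \cite[Lemma 2.1 and Proposition 2.3]{MR3393940} actually assert (``I expect it to amount to\dots''), and the decisive step --- the case analysis that is supposed to produce new residue classes on which the construction works, together with the check that the rigidity argument upgrading ``this matching is not acyclic'' to ``no matching is acyclic'' survives --- is precisely the part you do not carry out. As written, no new prime, no new congruence class, and no explicit set $A$ is ever exhibited. Note also that for the literal statement you do not need to enlarge the family at all: $\{p\equiv -1\ (\mathrm{mod}\ 8)\}$ is a single arithmetic progression, so the cited result plus Dirichlet already yields infinitude; the only point of enlarging the family is to prove something new, and that is exactly where the missing work lies.

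The paper's proof takes a different and far more concrete route, which you may want to compare against. It considers any odd prime $p$ for which the multiplicative order $d$ of $2$ modulo $p$ is odd --- a family of density $\frac{7}{24}$ by a theorem of Hasse, and \emph{not} a finite union of congruence classes, so Dirichlet alone would not give its infinitude --- and sets $A=B=\{\overline{2^m}\,:\,0\leq m\leq d-1\}$, the cyclic subgroup of $\Bbb{F}_p^\times$ generated by $2$. Two observations finish the argument: first, for any matching $f:A\rightarrow A$ the inverse $f^{-1}$ is again a matching with the same multiplicity function, so an acyclic matching must satisfy $f=f^{-1}$; second, an involution of a set of odd cardinality has a fixed point $a$, and then $a+f(a)=2a\in A$ because $A$ is closed under multiplication by $2$, contradicting the matching condition. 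Hence no matching $A\rightarrow A$ whatsoever is acyclic. Nothing like your auxiliary element $r$, the condition on $1+r$, or the competing matchings $a\mapsto a$ and $a\mapsto ra$ on a union of $H$-cosets appears; indeed on this $A$ the identity map is not even a matching. To salvage your route you would have to state and verify the actual hypotheses of the cited lemma and proposition for concrete new residue classes; otherwise the parity-of-involutions argument above is the clean way through.
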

\noindent
Indeed, it is proved in \cite{MR3393940} that if $p\equiv -1 \pmod 8$ or if the multiplicative order  of $2$ modulo $p$ is odd\footnote{The subset of odd primes $p$ for which the multiplicative order ${\rm{ord}}_p2$ is odd is of density $\frac{7}{24}$ according to a result of Hasse \cite{MR205975}.} then the acyclic matching property fails for $\Bbb{Z}/p\Bbb{Z}$. The basic idea is that if a matching $f:A\rightarrow A$ is acyclic, then $f$ and $f^{-1}$ coincide since obviously $m_f=m_{f^{-1}}$. But then, assuming that $\#A$ is odd, $f$ must have a fixed point $a\in A$ which requires  $a+f(a)=2a$ to be outside $A$. But one may choose a subset $A\subset(\Bbb{Z}/p\Bbb{Z})^\times$ with odd cardinality which is invariant under multiplication by $2$, e.g. the set of quadratic residues when $p\equiv -1 \pmod 8$, or the multiplicative subgroup generated by $\overline{2}$ when ${\rm{ord}}_p2$ is odd.

We next turn into Question \ref{main question} regarding the existence of acyclic matching from $A$ onto $B$ whenever all bijections $A\rightarrow B$ are matchings. Proposition below establishes this under the assumption that $B$, following the terminology of \cite{MR810691}, is a \textit{Sidon set}.   
\begin{proposition}\label{Answer}
Let $A,B$ be subsets of an abelian group $G$. Suppose $A$ and $B$ are of the same size satisfying $A\cap (A+B)=\emptyset$. Then there exists an acyclic matching $f:A\rightarrow B$ if we assume the equation $x+y=z+w$ has no solution in $B$ with 
$\{x,y\}\cap\{z,w\}=\emptyset$.
\end{proposition}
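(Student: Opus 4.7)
The plan is to proceed by induction on $n = |A| = |B|$, with the inductive step reducing to a single structural claim: there exists a pair $(a_0, b_0) \in A \times B$ whose sum $a_0 + b_0$ has a unique representation as $a + b$ with $(a, b) \in A \times B$. Granting this, I will set $f(a_0) := b_0$ and apply the inductive hypothesis to $A' := A \setminus \{a_0\}$ and $B' := B \setminus \{b_0\}$ (which inherit the disjointness $A' \cap (A' + B') = \emptyset$ and the Sidon property on $B'$) to obtain an acyclic matching $f' : A' \to B'$; the desired $f$ is then $f'$ extended by $f(a_0) = b_0$. To verify acyclicity, let $g : A \to B$ be a matching with $m_g = m_f$: uniqueness of the representation forces $m_f(a_0 + b_0) = 1$, hence $m_g(a_0 + b_0) = 1$ as well, and the unique $a \in A$ with $a + g(a) = a_0 + b_0$ must be $a_0$, giving $g(a_0) = b_0 = f(a_0)$. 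The restriction $g|_{A'}$ is a matching $A' \to B'$ with the same multiplicity function as $f'$, so $g|_{A'} = f'$ by induction, whence $g = f$.

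The substantive step is to exhibit a uniquely-represented sum in $A + B$, which I will do by counting ordered collisions, namely ordered pairs of distinct elements $(a, b), (a', b') \in A \times B$ with $a + b = a' + b'$. Any such collision rewrites as $a - a' = b' - b$ with $a \neq a'$ and $b \neq b'$. The Sidon hypothesis is equivalent to each non-zero element of $B - B$ having a unique ordered representation as $b' - b$ with $b, b' \in B$ (a standard rearrangement: a second representation would produce a Sidon-forbidden sum equation $b_1 + b_2 = b_3 + b_4$ with disjoint pairs). Consequently each of the $n(n-1)$ ordered pairs $(a, a') \in A \times A$ with $a \neq a'$ contributes at most one collision. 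Writing $N_k$ for the number of $c \in A + B$ with exactly $k$ representations, this bound reads $\sum_{k \geq 2} k(k-1) N_k \leq n(n-1)$; combined with $\sum_k k N_k = n^2$ and the inequality $k(k-1) \geq k$ for $k \geq 2$, one deduces $N_1 \geq n$, more than enough to supply the pair $(a_0, b_0)$.

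The main obstacle I anticipate is conceptual rather than technical: the Sidon hypothesis is stated naturally for sums inside $B$, whereas what I need is a bound on the mixed $A \times B$-collisions. The bridge is the equivalent differences formulation of Sidon described above. With that translation in hand, the recursion runs without further complication and the base case $n = 1$ is immediate.
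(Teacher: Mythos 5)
Your proof is correct, and it takes a genuinely different route from the paper's. The paper argues by contradiction from a minimal counterexample: assuming no acyclic matching exists for the minimal size $k$, it shows that for \emph{every} pair $(i,j)$ the sum $a_i+b_j$ admits a second representation $a_{i'}+b_{j'}$ with $i'\neq i$, $j'\neq j$, and then a pigeonhole argument (fixing $i$ and letting $j$ range over $k$ values while $i'$ ranges over only $k-1$) produces two parallel difference equations $b_{j'}-b_j=b_{\tilde{j'}}-b_{\tilde{j}}$, i.e.\ a Sidon violation. You instead run a direct induction whose engine is a global second-moment count: translating the Sidon hypothesis into the statement that every non-zero element of $B-B$ has a unique ordered representation (your translation is correct --- the degenerate coincidences all force either a zero difference or equal ordered pairs), you bound the number of ordered representation collisions in $A\times B$ by $n(n-1)$ and deduce $N_1\geq n$, so a uniquely represented sum $a_0+b_0$ exists; peeling it off preserves both hypotheses and the acyclicity of the extension follows cleanly from $m_f(a_0+b_0)=m_g(a_0+b_0)=1$. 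Your version is constructive (it builds the acyclic matching greedily) and yields the stronger quantitative fact that at least $n$ elements of $A+B$ are uniquely represented, whereas the paper's contradiction argument is more localized and pinpoints exactly where the Sidon condition is violated; both exploit the same underlying tension between collisions of sums $a+b$ and the distinctness of differences in a Sidon set. The only points worth making explicit in a write-up are the routine ones you left implicit: $0\notin B$ follows from $A\cap(A+B)=\emptyset$ (so restrictions are genuine matchings in the sense of Definition~\ref{main definition}), and the multiplicity functions satisfy $m_f=m_{f'}+\mathbf{1}_{\{a_0+b_0\}}$ so that $m_f=m_g$ does pass to $m_{f'}=m_{g|_{A'}}$.
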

\begin{proof}
Aiming for a contradiction, let $k$ be the smallest cardinality for which the proposition is false. Label elements of $A$ and $B$ as $A=\{a_1,\dots,a_k\}$ and $B=\{b_1,\dots,b_k\}$. Pick indices $i,j\in\{1,\dots,k\}$ arbitrarily. 
Bijections $f:A\rightarrow B$ with 
$a_i\mapsto b_j$ are all matchings and are in a one-to-one correspondence with the bijections 
$A\setminus\{a_i\}\rightarrow B\setminus\{b_j\}$. No such $f$ is  acyclic, so there exists another bijection $g:A\rightarrow B$ with the same multiplicity function. It is possible to find such a $g$ with $g(a_i)\neq b_j$ as otherwise no bijection (matching) $A\setminus\{a_i\}\rightarrow B\setminus\{b_j\}$ would be acyclic contradicting the minimality of $k$. Thus there exist $f,g:A\rightarrow B$
with the same multiplicity functions that satisfy $f(a_i)=b_j$ and $g(a_i)\neq b_j$. In particular, 
$a_i+f(a_i)=a_i+b_j$ should be in the support of $g$ as well; that element may be realized as $a_{i'}+g(a_{i'})=a_{i'}+b_{j'}$ for suitable 
$i',j'\in\{1,\dots,k\}$. Notice that $i\neq i'$ and $j\neq j'$ as otherwise $a_i+b_j=a_{i'}+b_{j'}$ implies $g(a_i)=b_j$.  
We deduce that: For any $i,j\in\{1,\dots,k\}$, there exist $i',j'\in\{1,\dots,k\}$ with 
$a_i-a_{i'}=b_{j'}-b_j$ where $i\neq i'$ and $j\neq j'$.
Fixing $i\in\{1,\dots,k\}$ and letting $j$ vary, by the pigeonhole principle, there exist $i'\in\{1,\dots,k\}\setminus\{i\}$ which is associated with two different indices $j$ and $\tilde{j}$: $a_i+b_j=a_{i'}+b_{j'}$ and $a_i+b_{\tilde{j}}=a_{i'}+b_{\tilde{j'}}$ where 
$i'\neq i$, $j'\neq j$, ${\tilde{j'}}\neq{\tilde{j}}$ and $j\neq\tilde{j}$. These equations may be written as 
$b_{j'}-b_j=b_{\tilde{j'}}-b_{\tilde{j}}=a_i-a_{i'}\neq 0$. Therefore, $b_j+b_{\tilde{j'}}=b_{j'}+b_{\tilde{j}}$
where no element from the left appears on the right.
\end{proof}

\begin{example}
Consider the cyclic group $\Bbb{Z}/n\Bbb{Z}$, and let $k$ be a positive integer with $k>1$ and $(k-1)(2^{k-1}+1)<n$. Take $B$ to be the geometric progression
$\left\{\overline{1},\overline{2},\dots,\overline{2^{k-1}}\right\}$.
The reader can easily check that the condition of Proposition \ref{Answer} on $B$ is satisfied. Hence any subset $A$ of $\Bbb{Z}/n\Bbb{Z}$ of size $k$ with 
$A\cap(A+B)=\emptyset$ admits an acyclic matching onto $B$. One can for instance take $A$ to be an arithmetic progression such as 
$\left\{\overline{a},\overline{a}+\overline{2^{k-1}+1},\dots,\overline{a}+\overline{(k-1)(2^{k-1}+1)}\right\}.$
\end{example}

In the case of \textit{symmetric matchings} where $A=B$, Theorem \ref{main1} establishes the existence of an acyclic matching in the absence of the Sidon condition imposed in Proposition \ref{Answer}, but at the expense of limiting the cardinality of $A$.
\begin{proof}[Proof of Theorem \ref{main1}]
Aiming for a contradiction, take $A$ to be a subset with the smallest possible size for which the identity map ${\rm{id}}:A\rightarrow A$ -- which is a matching due to $A\cap 2A=\emptyset$ -- is not acyclic. 
Therefore, writing $A$ as $\{a_1,\dots,a_k\}$ where $k:=\#A$, there should be a bijection $a_i\mapsto a_{\sigma(i)}$ with $\sigma\in{\rm{S}}_k\setminus\{{\rm{id}}\}$ which is a matching and has the same multiplicity function. In other words, $\{2a_i\}_{1\leq i\leq k}$ and 
$\{a_i+a_{\sigma(i)}\}_{1\leq i\leq k}$ coincide as multi-sets. Hence there must exist a second permutation $\lambda\in{\rm{S}}_k$ which yields the multi-set $\{2a_i\}_{1\leq i\leq k}$ as a re-ordering of the multi-set 
$\{a_i+a_{\sigma(i)}\}_{1\leq i\leq k}$, i.e. 
\begin{equation}\label{identities}
a_{\lambda(i)}+a_{\sigma(\lambda(i))}=2a_i \text{ for all } i\in\{1,\dots,k\}.
\end{equation}
The equation above and $\sigma\neq{\rm{id}}$ clearly imply that both permutations $\lambda$ and $\sigma\circ\lambda$ are different from the identity.
Moreover, no proper non-empty subset of $\{1,\dots,k\}$ is preserved by both $\sigma$ and $\sigma\circ\lambda$ (and hence $\lambda$) because for a proper subset of $A$, the identity map should provide an acyclic self-matching.  
Equation \eqref{identities} can be rephrased in terms of permutation matrices: Denoting the permutation matrix corresponding to a permutation $\nu\in{\rm{S}}_k$ with 
${\rm{P}}_\nu=[p_{ij}:=\delta_{i\nu(i)}]_{1\leq i,j\leq k}$,
the vector $[a_1\dots a_k]^{\rm{T}}$ -- whose entries are distinct -- should lie in the null space of $2{\rm{I}}_k-{\rm{P}}_\lambda-{\rm{P}}_{\sigma\circ\lambda}$. 
We claim that the hypothesis $k.2^{k-1}<p$ of the theorem implies that if permutations $\alpha,\beta\in{\rm{S}}_k\setminus\{{\rm{id}}\}$ generate a transitive subgroup of ${\rm{S}}_k$, then  no vector of $\Bbb{F}_p^k$ with distinct entries lies in the null space of $2{\rm{I}}_k-{\rm{P}}_\alpha-{\rm{P}}_\beta$. The resulting contradiction will then conclude the proof.

Notice that the vector  $[1\dots 1]^{\rm{T}}$ is in the kernel of $2{\rm{I}}_k-{\rm{P}}_\alpha-{\rm{P}}_\beta$. Our approach is to show that, over $\Bbb{F}_p$, the nullity of this matrix is one.  The key idea is to first prove this in characteristic zero: The matrix is real; so it suffices to show that the entries of any real vector in its kernel must be identical. This easily follows from the fact that $\Bbb{R}$ is ordered: if the entries of a real vector $[x_1\dots x_k]^{\rm{T}}$ satisfy $2x_i=x_{\alpha(i)}+x_{\beta(i)}$ for all $i$, then $\left\{i\in\{1,\dots,k\}\,:\, x_i\geq x_j\, \forall j\right\}$ is invariant under both $\alpha$ and $\beta$, and thus must coincide with $\{1,\dots,k\}$.
Now the characteristic polynomial $q(t)\in\Bbb{Z}[t]$ of $2{\rm{I}}_k-{\rm{P}}_\alpha-{\rm{P}}_\beta$ is monic with constant term $0$.
We claim that its coefficient of $t$ is non-zero; namely, the algebraic multiplicity of $0$ as an eigenvalue of $2{\rm{I}}_k-{\rm{P}}_\alpha-{\rm{P}}_\beta$ is one as well. This is due to the fact that the decomposition 
$$
\Bbb{R}^k=\{x_1+\dots+x_k=0\}\oplus\Bbb{R}.\{(1,\dots,1)\}
$$
is invariant under the transformation $2{\rm{I}}_k-{\rm{P}}_\alpha-{\rm{P}}_\beta$: The first subspace contains its image and the second one, as established, is its kernel. Hence $q(t)$ is $t$ times the characteristic polynomial for the restriction of $2{\rm{I}}_k-{\rm{P}}_\alpha-{\rm{P}}_\beta$ to its invariant subspace $\{x_1+\dots+x_k=0\}$. The latter polynomial has a non-zero constant term since $\{x_1+\dots+x_k=0\}$ intersects the kernel $\Bbb{R}.\{(1,\dots,1)\}$ of 
$2{\rm{I}}_k-{\rm{P}}_\alpha-{\rm{P}}_\beta$ trivially. We conclude that the coefficient of $t$ in the characteristic polynomial $q(t)$ of $2{\rm{I}}_k-{\rm{P}}_\alpha-{\rm{P}}_\beta$ is non-zero. 
If this remains true modulo $p$, then the rank of $2{\rm{I}}_k-{\rm{P}}_\alpha-{\rm{P}}_\beta$ over $\Bbb{F}_p$ will remain $k-1$. This is going to be achieved by arguing that the absolute value of the non-zero coefficient of $t$ in $q(t)$ should be less than $p$ if $k.2^{k-1}<p$. This coefficient is $(-1)^{k-1}$ times the sum of all $(k-1)\times(k-1)$ minors of $2{\rm{I}}_k-{\rm{P}}_\alpha-{\rm{P}}_\beta$ along the diagonal.
In each column of $2{\rm{I}}_k-{\rm{P}}_\alpha-{\rm{P}}_\beta$, and thus in each column of these minors, the sum of positive entries, as well as the sum of opposites of negative entries, is at most two. 
Thus the minors cannot be larger than $2^{k-1}$ due to an inequality on determinants of real matrices from  \cite{MR485920}.
This results in the desired bound $k.2^{k-1}$ for the absolute value of the coefficient of $t$ in $q(t)$.
\end{proof}

\begin{remark}
In the proof above, one can replace the finite field $\Bbb{F}_p$ with $\Bbb{F}_{p^n}$: The rank of $2{\rm{I}}_k-{\rm{P}}_\alpha-{\rm{P}}_\beta$ will remain $k-1$ over any extension $\Bbb{F}_{p^n}$ of $\Bbb{F}_p$. Consequently, Theorem \ref{main1} remains valid with the additive group $(\Bbb{Z}/p\Bbb{Z})^n$ of $\Bbb{F}_{p^n}$ in place of $\Bbb{Z}/p\Bbb{Z}$. 
\end{remark}

Theorem \ref{main2} provides a result similar to Theorem \ref{main1} on the existence of acyclic matchings, but now we deal with general matchings $f:A\rightarrow B$ and the condition $A\cap(A+B)=\emptyset$ is dropped at the expense of making subsets smaller than what appears in Theorem \ref{main1}.  The existence of acyclic matchings was established in \cite[Theorem 1]{MR1409421} by Alon et al. in the case of subsets of $\Bbb{Z}^n$. The proof uses the existence of a total ordering on $\Bbb{Z}^n$ in an essential way. In \cite[Theorem 4.1]{MR1618439}, Losonczy generalizes this result to torsion-free abelian groups by observing that any torsion-free abelian group admits a total ordering (cf. \cite{MR0007779}). Below, we prove Theorem \ref{main2} by invoking a theorem from  arithmetic combinatorics  that allows one to order the elements of a small enough subset of $\Bbb{Z}/p\Bbb{Z}$ in a certain way compatible with the group structure.

\begin{proof}[Proof of Theorem \ref{main2}]
We reproduce the proof of \cite{MR1618439} by utilizing a \textit{rectification principle} which asserts that a sufficiently small subset of $\Bbb{Z}/p\Bbb{Z}$ may be embedded in integers while preserving certain additive properties. We shall use the sharpest possible version established in \cite{MR2452847}: 
\begin{itemize}
\item For any subset $X$ of $\Bbb{Z}/p\Bbb{Z}$ with $\#X\leq\log_2 p$ there exists an injection $\varphi:X\hookrightarrow\Bbb{Z}$ with the property that a relation such as $x+y=z+w$ among the elements of $X$ implies 
\begin{equation}\label{rectification}
\varphi(x)+\varphi(y)=\varphi(z)+\varphi(w).    
\end{equation}
\end{itemize}
This result is applicable to $X:=(A+B)\cup A\cup B\cup\{0\}$ since its size is no larger than
$$
\#(A+B)+\#A+\#B+1\leq (\# A)(\# B)+\#A+\#B+1=(\#A+1)^2\leq\log_2 p.
$$
Replacing $\varphi$ with $\varphi-\varphi(0)$, one may assume that $\varphi$ sends the identity element of $G$ to zero. We can then call an element $b$ of $B$ positive or negative according to the sign of the integer $\varphi(b)$. In particular, if $b\in B$ is positive, then for every $a\in A$, $\varphi(a+b)$ -- which is equal to $\varphi(a)+\varphi(b)$ by \eqref{rectification} -- is larger than $\varphi(a)$:  
$$
\varphi(a+b)=\varphi(a+b)+\varphi(0)=\varphi(a)+\varphi(b)>\varphi(a).
$$
Using this fact, we  construct an acyclic matching from $A$ to $B$ first in the case that elements of $B$ are all positive. Write elements of $A$ as $a_1,a_2,\dots,a_k$ so that 
\begin{equation}\label{ordering}
\varphi(a_1)<\varphi(a_2)<\dots<\varphi(a_k).
\end{equation}
Starting from $a_1$, notice that $a_1+B\not\subseteq A$ due to $0\notin B$. Thus there exists $b\in B$ with $a_1+b\notin A$. Define $f(a_1)$ to be the element $b$ of $B$ with this property for which $\varphi(b)$ is as small as possible. 
One can continue in this manner inductively: Suppose the values of $f$ at $a_1,\dots,a_{i-1}$ are defined. We have 
\begin{equation}\label{construction}
a_i+(B\setminus\{f(a_1),\dots,f(a_{i-1})\})\not\subseteq A\setminus\{a_1,\dots,a_{i-1}\}
\end{equation}
because the subsets have the same cardinality, and the first one does not contain $a_i$ while the second one does. We then define $f(a_i)$ to be an element $b$ from $B\setminus\{f(a_1),\dots,f(a_{i-1})\}$ with
$a_i+b\notin A\setminus\{a_1,\dots,a_{i-1}\}$ and with $\varphi(b)$ as small as possible. Notice that the matching property is satisfied: $a_i+f(a_i)$ does not belong to $A$ since otherwise it should lie in 
$\{a_1,\dots,a_{i-1}\}$ which is impossible because of the positivity of $f(a_i)\in B$: 
$$
\varphi(a_i+f(a_i))>\varphi(a_i)>\varphi(a_1),\dots,\varphi(a_{i-1}).
$$
This procedure results in a bijection $f:A\rightarrow B$ which is a matching. We next show that it is acyclic. 
Assume the contrary: let $g:A\rightarrow B$ be a different matching with the same multiplicity function, i.e. $m_f=m_g$. Since $f\neq g$, one can pick an $x\in A+B$ satisfying 
\begin{equation}\label{auxiliary9}
\{a\in A\,:\, a+f(a)=x\}\neq\{a\in A\,:\, a+g(a)=x\}
\end{equation}
and $\varphi(x)$ as small as possible. The sets from \eqref{auxiliary9} are of the same size since $m_f(x)=m_g(x)$. We can choose an element from the second one which is not in the first: Let $i\in\{1,\dots,k\}$ be the smallest index satisfying $a_i+f(a_i)\neq x$ and $a_i+g(a_i)=x$. We now reach a contradiction: One cannot have $\varphi(a_i+f(a_i))<\varphi(a_i+g(a_i))=\varphi(x)$ because then  \eqref{auxiliary9} holds with $a_i+f(a_i)$ in place of $x$, contradicting the way $x$ was chosen. Hence $\varphi(a_i+f(a_i))\geq \varphi(a_i+g(a_i))$ or, invoking the additivity property, $\varphi(f(a_i))\geq\varphi(g(a_i))$. But $\varphi$ is injective, so 
$\varphi(f(a_i))>\varphi(g(a_i))$. Due to our choice of $i$, the matchings $f$ and $g$ coincide on $\{a_1,\dots,a_{i-1}\}$. So $g(a_i)$ is a member of the subset 
$$B\setminus\{f(a_1),\dots,f(a_{i-1})\}=B\setminus\{g(a_1),\dots,g(a_{i-1})\}$$ 
appearing in \eqref{construction}; and $a_i+g(a_i)\notin A$ since $g$ is a matching. But, in view of $\varphi(f(a_i))>\varphi(g(a_i))$, this violates the way  $f(a_i)$ was chosen.

Finally, we should address the situation where $B$ has negative elements (recall that $0\notin B$). Partition $B$ as $B_-\sqcup B_+$ where 
$$
B_-:=\{b\in B\,:\, \varphi(b)<0\}, \quad B_+:=\{b\in B\,:\, \varphi(b)>0\}.
$$
Denote the size of $B_-$ by $1\leq l\leq k=\#B$. Writing elements of $A$ as $a_1,\dots,a_k$ as before (see \eqref{ordering}), we can similarly partition $A$ into subsets 
$$
A_-:=\{a_1,\dots,a_l\}, \quad A_+:=\{a_{l+1},\dots,a_k\};
$$
which are of the same sizes. From what we have established so far, there is an acyclic matching $A_+\rightarrow B_+$, and also an acyclic matching $A_-\rightarrow B_-$ by a straightforward  modification of our construction above for the case that elements of the target set are all negative. These two acyclic matchings define a matching $f:A=A_-\sqcup A_+\rightarrow B=B_-\sqcup B_+$ which we claim is acyclic as well. 
It suffices to show that any other matching $g:A\rightarrow B$ with $m_f=m_g$ maps $A_-$ onto $B_-$ and $A_+$ onto $B_+$. This is due to the fact that the minimum of the expression
$$
\min_{A'\subset A, \#A'=l}\sum_{a\in A'}(\varphi(a')+\varphi(h(a')))=\min_{A'\subset A, \#A'=l}\sum_{a\in A'}\varphi(a'+h(a'))
$$
as $h$ varies among bijections $A\rightarrow B$ is attained precisely  when $A'=A_-$ and $h(A_-)=B_-$; conditions that $f$ satisfies. The integer above for $h=g$ is the same as the corresponding number when $h=f$ due to $m_f=m_g$. We deduce that $g(A_-)=B_-$ and this concludes the proof.
\end{proof}

\begin{remark}
By invoking a version of the rectification principle for sets with a ``small doubling'' property from \cite[Theorem 2.1]{MR1608875}, in the theorem above one can forgo the logarithmic bound for a linear one: Again, set $X\subset\Bbb{Z}/p\Bbb{Z}$ to be $(A+B)\cup A\cup B\cup\{0\}$. Suppose we have $\#(X+X)<\sigma(\#X)$ where $\sigma>0$. Then there exists a constant $c=c(\sigma)>0$ such that one can construct an acyclic matching $f:A\rightarrow B$ provided that $\#X\leq cp$.
\end{remark}

Theorems \ref{main1} and \ref{main2} raise the following natural question: 
\begin{question}\label{Mersenne}
What is the largest $\epsilon>0$ for which there exists $c_1>0$ and $c_2$ with the property that for any prime number $p$, any two subsets $A$ and $B$ of $\Bbb{Z}/p\Bbb{Z}$  with 
$0\notin B$ and $\#A=\#B\leq c_1(\log_2p)^\epsilon+c_2$ can be matched acyclically?
\end{question}
\noindent
Theorem \ref{main2} implies that the answer to Question \ref{Mersenne} should satisfy $\epsilon\geq\frac{1}{2}$. On the other hand, we do not expect any $\epsilon>1$ to work because there are conjecturally infinitely many Mersenne primes $p$; and for any such prime, the construction appeared in the proof of Theorem \ref{Jafari} exhibits a subset of order $O(\log_2p)$ in $\Bbb{Z}/p\Bbb{Z}$ which admits no acyclic matching onto itself.    

\subsection{Enumerative questions}

Let $G$ be an arbitrary abelian group and suppose $A$ and $B$ are two finite subsets of $G$ of size $k$ with $0\notin B$. The goal of this section is to provide bounds for the number of matchings $A\rightarrow B$
(i.e. $\#\mathcal{M}(A,B)$) in terms of $k$. The key idea is to interpret elements of $\mathcal{M}(A,B)$ as perfect matchings in a certain bipartite graph.

\begin{definition}
Notations as above, the bipartite graph $\mathcal{G}_{A,B}$ associated with $A$ and $B$ has the disjoint union $A\dot\cup B$ as its set of vertices with $a\in A$ connected to $b\in B$ if and only if $a+b\notin A$. 
\end{definition}
\noindent
Matchings $A\rightarrow B$ are clearly in correspondence with the perfect matchings in $\mathcal{G}_{A,B}$ (cf. Definition \ref{main definition}).

\begin{question}
Is there a graph-theoretical interpretation of acyclic matchings from $A$ to $B$?  
\end{question}

In view of the preceding discussion, enumerating matchings $A\rightarrow B$ amounts to counting perfect matchings in a bipartite graph. There is an extensive literature on the problem of counting the number of matchings in a graph \cite{MR2889590, MR2438582}. In particular, it is well-known that the number of perfect matchings in a simple undirected graph $\mathcal{G}$ is not larger than the square root of the \textit{permanent} of its adjacency matrix with equality if $\mathcal{G}$ is bipartite \cite{MR136399,kasteleyn1961statistics}.   
In the case of the bipartite graph $\mathcal{G}_{A,B}$ whose vertices are partitioned into two parts $A$ and $B$  of the same size, the permanent of the adjacency matrix is the square of that of the \textit{biadjacency matrix} 
\begin{equation}\label{biadjacency}
M_{A,B}=[m_{ij}]_{1\leq i,j\leq k}, \quad     m_{ij}:=\begin{cases}1&\mathrm{if}\; a_i+b_j\notin A,\\ 0&\mathrm{otherwise},\end{cases}
\end{equation}
where we have denoted elements of $A$ and $B$ by $a_1,\dots,a_k$ and $b_1,\dots,b_k$ respectively. 
This whole discussion results in the following: 
\begin{proposition}\label{number of matchings}
With subsets $A$ and $B$ of $G$ as above, one has 
$$
\#\mathcal{M}(A,B)={\rm{per}}(M_{A,B})
$$
where $M_{A,B}$ is the matrix from \eqref{biadjacency}.
\end{proposition}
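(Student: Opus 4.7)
The plan is to prove the identity by unpacking the definition of the permanent and directly matching terms with elements of $\mathcal{M}(A,B)$, rather than invoking the general permanent-based formula for perfect matchings that was recalled just before the statement. Given the labelings $A=\{a_1,\dots,a_k\}$ and $B=\{b_1,\dots,b_k\}$, any matching $f\in\mathcal{M}(A,B)$ is encoded by a unique permutation $\sigma_f\in{\rm S}_k$ determined by $f(a_i)=b_{\sigma_f(i)}$. The matching condition $a+f(a)\notin A$ for all $a\in A$ translates under this encoding to the requirement $a_i+b_{\sigma_f(i)}\notin A$ for every $i\in\{1,\dots,k\}$.

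Next I would examine an individual term of the permanent. Recall
\[
{\rm{per}}(M_{A,B})=\sum_{\sigma\in {\rm{S}}_k}\prod_{i=1}^k m_{i,\sigma(i)}.
\]
Since each entry $m_{ij}$ of $M_{A,B}$ lies in $\{0,1\}$, the product $\prod_{i=1}^k m_{i,\sigma(i)}$ equals $1$ precisely when $m_{i,\sigma(i)}=1$ for every $i$, i.e., when $a_i+b_{\sigma(i)}\notin A$ for every $i$; otherwise it vanishes. By the preceding paragraph, this is exactly the condition for the bijection $a_i\mapsto b_{\sigma(i)}$ to belong to $\mathcal{M}(A,B)$. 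Consequently
\[
{\rm{per}}(M_{A,B})=\#\bigl\{\sigma\in{\rm S}_k\,:\, a_i+b_{\sigma(i)}\notin A \text{ for all } i\bigr\}=\#\mathcal{M}(A,B),
\]
which is the desired equality.

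No serious obstacle is anticipated: the proposition is essentially a reformulation of the definition of permanent in the present setting, and the correspondence between matchings $A\rightarrow B$ and perfect matchings in $\mathcal{G}_{A,B}$ was already explained in the paragraph introducing $\mathcal{G}_{A,B}$. The only point that deserves a brief remark in the write-up is that the $0/1$ nature of $M_{A,B}$ allows one to bypass the square-root relationship $\#\{\text{perfect matchings}\}=\sqrt{{\rm{per}}(M_\mathcal{G})}$ for the full adjacency matrix, which would otherwise require verifying the block formula
\[
M_\mathcal{G}=\begin{bmatrix}\mathbf{0}&M_{A,B}\\ (M_{A,B})^{\rm T}&\mathbf{0}\end{bmatrix}
\]
together with ${\rm{per}}(M_\mathcal{G})={\rm{per}}(M_{A,B})^2$ for square off-diagonal blocks; the direct term-by-term argument above avoids this detour entirely.
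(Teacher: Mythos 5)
Your proof is correct, and it takes a more direct route than the paper. The paper deduces the proposition from the general graph-theoretic fact that the number of perfect matchings of a graph $\mathcal{G}$ is at most $\sqrt{{\rm per}(M_{\mathcal{G}})}$, with equality in the bipartite case, combined with the block computation showing that the permanent of the full adjacency matrix of $\mathcal{G}_{A,B}$ is ${\rm per}(M_{A,B})^2$. You instead expand ${\rm per}(M_{A,B})=\sum_{\sigma\in{\rm S}_k}\prod_i m_{i,\sigma(i)}$ and observe that, since the entries are $0$ or $1$, a term survives exactly when $a_i+b_{\sigma(i)}\notin A$ for all $i$, i.e.\ exactly when $a_i\mapsto b_{\sigma(i)}$ is a matching in the sense of Definition \ref{main definition}; this is the standard proof that the permanent of a biadjacency matrix counts perfect matchings of a bipartite graph, specialized to $\mathcal{G}_{A,B}$. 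Your version is self-contained (no appeal to the cited results of Kasteleyn et al.\ nor to the identity ${\rm per}(M_{\mathcal{G}})={\rm per}(M_{A,B})^2$ for block anti-diagonal matrices), and it feeds directly into the subsequent application of the Bregman--Minc and permanent lower bounds, which are stated for the $k\times k$ matrix $M_{A,B}$ anyway. What the paper's detour buys is mainly expository: it places the proposition inside the general literature on counting perfect matchings via permanents, which motivates the enumerative questions of that subsection. Both arguments are complete and correct.
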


We now arrive at the main result of this section which provides  upper and lower bounds on the number of matchings:

\begin{proposition}\label{inequality proposition}
Suppose $A$ and $B$ are subsets of an abelian group $G$ of the same size. For each $a\in A$ and $b\in B$ define:
$$
A_b:=\{a'\in A\,:\, a'+b\notin A\},\quad B_a:=\{b'\in B\,:\, a+b'\notin A\}.
$$
The number of matchings from $A$ to $B$ admits the upper bound below:
\begin{equation}\label{inequality}
\#\mathcal{M}(A,B)\leq\min\left\{\prod_{a\in A}((\#B_a)!)^{\frac{1}{\#B_a}},\prod_{b\in B}((\#A_b)!)^{\frac{1}{\#A_b}}\right\}.\footnote{Here $0!^{\frac{1}{0}}$ should be interpreted as zero; if one of the sets $A_b$ or $B_a$ is empty (e.g. when $b=0$), then there is no matching $A\rightarrow B$.}
\end{equation}
Moreover, denoting the size of $A$ and $B$ by $k$, suppose the numbers $\{\#B_a\}_{a\in A}$ and $\{\#A_b\}_{b\in B}$ are written in the increasing order as  
$$
\#B_{a_1}\leq\dots\leq \#B_{a_k}, \quad \#A_{b_1}\leq\dots\leq \#A_{b_k}.
$$
Then we have the following lower bound for the number of matchings from $A$ to $B$:
\begin{equation}\label{inequality'}
\#\mathcal{M}(A,B)\geq\max\left\{\prod_{i=1}^k\max(\#B_{a_i}-i+1,0),\prod_{i=1}^k\max(\#A_{b_i}-i+1,0)\right\}.
\end{equation}

\end{proposition}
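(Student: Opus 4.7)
The plan is to reduce both bounds to classical permanent estimates for $0$-$1$ matrices. By Proposition \ref{number of matchings}, $\#\mathcal{M}(A,B)={\rm{per}}(M_{A,B})$, and the matrix $M_{A,B}$ from \eqref{biadjacency} is a $0$-$1$ matrix whose $i$-th row has exactly $\#B_{a_i}$ nonzero entries (they sit precisely at the columns indexed by elements of $B_{a_i}$) and whose $j$-th column similarly has $\#A_{b_j}$ nonzero entries. Both \eqref{inequality} and \eqref{inequality'} thus become statements about a permanent in terms of its line sums.

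For the upper bound \eqref{inequality}, I would invoke the Br\'egman--Minc inequality: for any $n\times n$ $0$-$1$ matrix $M$ with row sums $r_1,\dots,r_n$, ${\rm{per}}(M)\leq\prod_{i=1}^n (r_i!)^{1/r_i}$. Applying this to $M_{A,B}$ yields the first term inside the minimum in \eqref{inequality}; applying it to $(M_{A,B})^{\rm{T}}$ and using ${\rm{per}}(M)={\rm{per}}(M^{\rm{T}})$ yields the second. The convention that $0!^{1/0}$ equals zero covers the degenerate case in which some $B_{a}$ or $A_{b}$ is empty, so that the corresponding row or column of $M_{A,B}$ vanishes and ${\rm{per}}(M_{A,B})=0$ as well.

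For the lower bound \eqref{inequality'}, I would construct matchings $f\colon A\to B$ greedily. Relabel the elements of $A$ so that the row sums are in increasing order $\#B_{a_1}\leq\dots\leq\#B_{a_k}$, and define $f(a_1),\dots,f(a_k)$ in this order. At step $i$, the set of admissible images is $B_{a_i}\setminus\{f(a_1),\dots,f(a_{i-1})\}$, and its cardinality is at least $\max(\#B_{a_i}-i+1,0)$ regardless of what was chosen before, since at most $i-1$ of the $\#B_{a_i}$ elements of $B_{a_i}$ can have been used. Distinct sequences of choices give distinct bijections $A\to B$, and every such bijection is a matching because $a_i+f(a_i)\notin A$ holds by construction; consequently $\#\mathcal{M}(A,B)\geq \prod_{i=1}^{k}\max(\#B_{a_i}-i+1,0)$. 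Running the same greedy procedure on $(M_{A,B})^{\rm{T}}$---equivalently, building $f^{-1}$ by processing the elements of $B$ in the order of increasing $\#A_{b_i}$---produces the other candidate in \eqref{inequality'}. The only genuinely external ingredient is the Br\'egman--Minc inequality, which I would cite as a black box; the identification of the line sums of $M_{A,B}$ with the quantities $\#B_{a_i}$ and $\#A_{b_j}$ is immediate from \eqref{biadjacency}, and no substantive obstacle remains.
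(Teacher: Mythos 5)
Your proof is correct and, for the upper bound \eqref{inequality}, identical to the paper's: both identify $\#\mathcal{M}(A,B)$ with ${\rm{per}}(M_{A,B})$ via Proposition \ref{number of matchings}, read off the row and column sums of $M_{A,B}$ as $\#B_{a}$ and $\#A_{b}$, and apply Br\'egman--Minc to the matrix and its transpose. For the lower bound \eqref{inequality'} the paper instead cites, as a black box, the permanent inequality ${\rm{per}}(M)\geq\prod_{i}\max(r'_i-i+1,0)$ for a $(0,1)$-matrix with increasingly ordered row sums; you replace that citation with a direct greedy construction of matchings, choosing $f(a_i)\in B_{a_i}\setminus\{f(a_1),\dots,f(a_{i-1})\}$ in order of increasing $\#B_{a_i}$ and observing that at least $\max(\#B_{a_i}-i+1,0)$ choices survive at each step. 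This is a legitimate and self-contained route --- it is essentially the standard proof of the cited permanent bound, transported to the bipartite-graph picture --- and it has the minor advantage of not requiring the permanent formalism at all for the lower bound; the only point worth making explicit is that when some factor $\max(\#B_{a_i}-i+1,0)$ vanishes the product is zero and the inequality is vacuous, so the greedy process only needs to run to completion when all factors are positive.
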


\begin{proof}
According to Proposition \ref{number of matchings}, one just needs to establish these inequalities for ${\rm{per}}(M_{A,B})$. Inequality \eqref{inequality} is a result of applying the famous \textit{Bregman-Minc inequality} (\cite{MR155843,bregman1973some}) to the biadjacency matrix $M_{A,B}$. (A similar idea is used in \cite{MR2398830} for bounding the number of perfect matchings in a graph.) 
Inequality \eqref{inequality'} immediately follows from a lower bound for the permanent of $(0,1)$-matrices established on  \cite[p. 26]{MR195879}.
\end{proof}

The final example of this section discusses a  lower bound for the number of symmetric matchings $A\rightarrow A$. 

\begin{example}
Take $A$ to be a subset of an abelian group $G$ that does not contain the identity element and is of size $k$. Suppose all intersections $A\cap (A-a)$ (where $a\in A$) are of the same size $k-r$ where $1\leq r\leq k$. Applying the celebrated \textit{van der Waerden conjecture} (established independently in \cite{egorycev1981solution} and \cite{falikman1981proof}) to the biadjacency matrix of $\mathcal{G}_{A,A}$, we deduce that 
$$
\#\mathcal{M}(A,A)\geq r^k\frac{k!}{k^k}.
$$
One example of subsets $A$ with such an intersection property is the following: Let $\psi:G\rightarrow G$ be a group automorphism and take $A$ to be the orbit of a non-identity element under the action of $\psi$. The intersections $A\cap (A-a)$ are of the same cardinality as $\psi$ bijects them onto each other:
$$
\psi(A\cap (A-a))=\psi(A)\cap (\psi(A)-\psi(a))=A\cap (A-\psi(a)).
$$
\end{example}

\section{Matchings in linear subspaces of field extensions}
\subsection{Primitive subspaces}
Let $L/F$ be a field extension and suppose $A$ and $B$ are two $F$-subspaces of $L$ with $\dim_FA=\dim_FB<\infty$ and $1\notin B$. Primitive subspaces of $L$ (defined in \cite{MR3723776}) naturally arise in deciding if  $A$ is matched to  $B$ (cf. Definition \ref{linear definition}). To elaborate, we review the following situations from the literature where the answer is positive:
\begin{itemize}
\item $A$ is matched to $B$ if the adjunction of any non-zero element of $B$ to $F$ generates $L$, i.e. if the subspace $B$ is primitive \cite[Theorem 4.2]{MR3723776};
\item $A$ is matched to $B$ if $A=B$ \cite[Theorem 5.1]{MR2735391};
\item $A$ is matched to $B$ if $L/F$ has no proper finite intermediate extension $E/F$ of degree larger than one \cite[Theorem 5.2]{MR2735391}.\footnote{A refinement of this result appears in 
\cite[Theorem 5.5]{2012arXiv1208.2792E}: In any extension $L/F$, $A$ can be matched to $B$ if $\dim_FA=\dim_FB$ is smaller than $\min_{F\subsetneq E\subseteq F}[E:F]$. See \cite[Corollary 3.6]{MR3723776}
for the corresponding group-theoretic result.} 
\end{itemize}
Notice that, as discussed in \S1, these results have parallels in the context of matching a finite subset $A$ of an abelian group $G$ to another finite subset $B$ which is of the same size and does not contain the identity element of $G$ (see \cite[Proposition 3.4]{MR1618439}, \cite[Theorem 2.1]{MR1618439} and \cite[Theorem 3.1]{MR1618439} respectively). The proofs utilize a \textit{dimension criterion} which is based on a linear version of Hall's marriage theorem. (Similarly, Hall's marriage theorem is used in the proof of the aforementioned results  from \cite{MR1618439}.)
The dimension criterion asserts that inequalities \eqref{intersection criterion} are necessary and sufficient conditions for an ordered basis $\{a_1,\dots,a_n\}$ of $A$ to be matched to an ordered basis of $B$.

We now focus on proving Theorem \ref{primitive}. Given a finite extension $L/F$, primitive subspaces are those $F$-subspaces of $L$ that intersect any extension $E\subsetneq L$ of $F$ only trivially. By the primitive element theorem, there are only finitely many intermediate subfields if and only if $L/F$ is a simple extension (of course there is no primitive subspace unless $L/F$ is simple).  Therefore, to determine the largest possible dimension of a primitive subspace of $L$ in the setting of Theorem \ref{primitive}, one needs to determine the same for $F$-subspaces which intersect members of a certain finite family $\mathcal{V}$ of $F$-subspaces of $L$ trivially -- $\mathcal{V}$ being the family of proper intermediate subfields of the extension $L/F$. This is easier to do if the base field $F$ is infinite or at least large enough; see Lemma \ref{infinite field} below. However, finite-dimensional vector spaces over finite fields may be covered by finitely many of their proper subspaces. So, in order to establish Theorem \ref{primitive} for finite $F$, one should take into account that $\mathcal{V}$ here is a special family of subspaces whose members are subfields. This is more subtle and will be discussed in Lemma \ref{finite field}. 
\begin{example}
Let $E:=F(a)$ be a proper subfield of $L$ where $k:=[E:F]>1$. Then  $A:=E$ is not matched to any subspace of the form $B:=\langle a,\dots,a^{k-1},x\rangle$ where $x\in L\setminus E$ (\cite{MR2735391}). See \cite[Theorem 3.1]{MR1618439} for the group-theoretic analogue.
\end{example}

\begin{lemma}\label{infinite field}
Let $V$ be a finite-dimensional vector space over a field $F$ and let $\mathcal{V}=\{V_i\}_{i=1}^{m}$ be a finite family of subspaces of $V$ where $m\leq\#F$. Then the largest possible dimension of a subspace $W$ of $V$ which intersects every member of $\mathcal{V}$  trivially is given by 
$$
\dim_FV-\max_{1\leq i\leq m}\dim_FV_i.
$$
\end{lemma}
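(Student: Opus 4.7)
\smallskip

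\noindent\textbf{Proof plan for Lemma \ref{infinite field}.}
The upper bound is immediate: for each $i$, a subspace $W$ with $W\cap V_i=0$ must satisfy $\dim_FW+\dim_FV_i=\dim_F(W+V_i)\leq\dim_FV$, so $\dim_FW\leq\dim_FV-\dim_FV_i$; taking the minimum over $i$ gives $\dim_FW\leq\dim_FV-\max_i\dim_FV_i$. The substantive content is therefore the lower bound, and the plan is to construct a subspace $W$ of dimension $d:=\dim_FV-\max_{1\leq i\leq m}\dim_FV_i$ which meets every $V_i$ only at $0$.

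The construction is inductive: I build a basis $w_1,\dots,w_d$ of $W$ one vector at a time, maintaining the invariant that $W_k:=\langle w_1,\dots,w_k\rangle$ satisfies $W_k\cap V_i=\{0\}$ for every $i$. Given such a $W_k$ with $k<d$, a straightforward check (splitting $w\in W_{k+1}\cap V_i$ as $u+\lambda w_{k+1}$ with $u\in W_k$ and using the inductive hypothesis) shows that the invariant extends to $W_{k+1}$ as long as
$$w_{k+1}\in V\setminus\bigcup_{i=1}^m(V_i+W_k).$$
So the two things to verify are that each $V_i+W_k$ is a \emph{proper} subspace of $V$, and that their union is not all of $V$.

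The properness is the reason the quantity $d$ is precisely what it is: because $W_k\cap V_i=0$, we have $\dim_F(V_i+W_k)=\dim_FV_i+k$, and for $k<d$ this is strictly less than $\dim_FV-d+d=\dim_FV$ by the definition of $d$. The hard step I expect is showing the union is not the whole space. This is where the hypothesis $m\leq\#F$ is essential. I would prove separately the following standard fact: if $U_1,\dots,U_m$ are proper subspaces of an $F$-vector space $V$ with $m\leq\#F$, then $V\neq\bigcup_iU_i$. The quickest argument is to assume (after pruning) that no $U_i$ is contained in the union of the others, pick $v_1\in U_1\setminus\bigcup_{i>1}U_i$ and $v_2\in V\setminus U_1$, and consider the affine line $\{v_2+\lambda v_1:\lambda\in F\}$, whose $\#F$ points all avoid $U_1$. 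Since $m-1<\#F$, the pigeonhole principle forces some $U_i$ with $i\geq 2$ to contain two distinct points of this line, hence the difference $(\lambda-\mu)v_1$ and then $v_1$ itself, a contradiction.

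Combining these ingredients completes the induction and produces $W_d$ of dimension $d$ with $W_d\cap V_i=\{0\}$ for every $i$, which together with the upper bound established at the outset proves the lemma. The only genuinely delicate point is the covering lemma over a finite field, which is why the hypothesis $m\leq\#F$ cannot be dropped; everything else is bookkeeping.
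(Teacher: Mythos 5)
Your proof is correct and follows essentially the same route as the paper: the dimension count for the upper bound, and the greedy/inductive construction of a basis of $W$ by repeatedly choosing a vector outside $\bigcup_i(V_i+W_k)$, using the fact that a vector space over $F$ is not the union of at most $\#F$ proper subspaces. The only difference is that you prove this covering fact directly (correctly), whereas the paper cites it from the literature.
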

\begin{proof}
This is straightforward and follows from the fact that  $V$ cannot be covered by a finite number of its proper subspaces unless the number of subspaces is larger than $\#F$ (in which case $F$ is finite) \cite[Lemma 2]{MR111739}.
\end{proof}

We now turn into finite fields. As usual, for any prime power $q$, the finite field with $q$ elements is denoted by $\Bbb{F}_q$.
\begin{lemma}\label{finite field}
The codimension of the largest $\Bbb{F}_q$-subspace of $\Bbb{F}_{q^n}$ whose all non-zero elements are primitive is the same as the largest possible degree over $\Bbb{F}_q$ that a proper intermediate subfield can attain. 
\end{lemma}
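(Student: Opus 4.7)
The plan is to reformulate the claim in arithmetic terms. Let $d^{*}$ denote the largest proper divisor of $n$; since the proper intermediate subfields of $\Bbb{F}_{q^n}/\Bbb{F}_q$ are precisely the $\Bbb{F}_{q^d}$ with $d\mid n$ and $d<n$, the target statement is that the largest primitive $\Bbb{F}_q$-subspace of $\Bbb{F}_{q^n}$ has dimension exactly $n-d^{*}$. The upper bound is immediate: any primitive subspace $W$ must meet $\Bbb{F}_{q^{d^{*}}}$ only at $0$, so a dimension count gives $\dim_{\Bbb{F}_q}W\leq n-d^{*}$. The real work is to exhibit a primitive subspace of this dimension.

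The element-picking strategy used in Lemma \ref{infinite field} is unavailable: its hypothesis $m\leq \#F$ can fail because $\#F=q$ may be dwarfed by the number of proper subfields, and in any case finite-dimensional vector spaces over finite fields can be covered by finitely many proper subspaces. The main obstacle is thus to exploit the extra structure enjoyed by the family of subfields, which unlike generic subspaces are all compatible with a single cyclic Galois action. My strategy is to pick coordinates in which that Galois action becomes a cyclic shift, so that subfields translate into periodic sequences and the construction reduces to a trivial combinatorial observation.

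Concretely, I will invoke the normal basis theorem for finite fields to find $\alpha\in\Bbb{F}_{q^n}$ such that $\alpha,\alpha^q,\alpha^{q^2},\dots,\alpha^{q^{n-1}}$ form an $\Bbb{F}_q$-basis. Writing $x=\sum_{i=0}^{n-1}c_i\alpha^{q^i}$ identifies $\Bbb{F}_{q^n}$ with $\Bbb{F}_q^n$. Since $c_i^q=c_i$, the Frobenius $\phi:x\mapsto x^q$ permutes the basis cyclically and hence acts on coordinate vectors by a cyclic shift. For any divisor $d$ of $n$, the subfield $\Bbb{F}_{q^d}=\mathrm{Fix}(\phi^d)$ therefore corresponds precisely to $d$-periodic sequences, i.e.\ to vectors $(c_0,\dots,c_{n-1})$ satisfying $c_{j+d}=c_j$ for all $j$ modulo $n$.

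With this dictionary in hand, set
\[
W:=\{(c_0,\dots,c_{n-1})\in\Bbb{F}_q^n:c_0=c_1=\cdots=c_{d^{*}-1}=0\},
\]
which has dimension $n-d^{*}$. If a nonzero $c\in W$ were to lie in a proper subfield $\Bbb{F}_{q^d}$, then $d\mid n$ and $d\leq d^{*}$, so the first $d$ coordinates of $c$ vanish; $d$-periodicity would then force $c=0$, a contradiction. Hence every nonzero element of $W$ escapes every proper intermediate subfield and is thus primitive, establishing the lower bound on the dimension and completing the proof.
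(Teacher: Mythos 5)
Your proof is correct, and although it opens the same way as the paper's --- normal basis theorem, Frobenius acting as a cyclic shift on coordinates, and the subfield $\Bbb{F}_{q^d}$ becoming the space of $d$-periodic coordinate vectors --- the combinatorial core is genuinely different and notably simpler. The paper first reduces to squarefree $n$, identifies $\Bbb{Z}/n\Bbb{Z}$ with $\Bbb{Z}/p_1\Bbb{Z}\times\dots\times\Bbb{Z}/p_s\Bbb{Z}$, and builds the complement of the support set $T$ as the graph of a sum of surjections $f_i:\Bbb{Z}/p_i\Bbb{Z}\rightarrow\Bbb{Z}/p_1\Bbb{Z}$, checking that this graph meets every coordinate line; the condition it verifies is that $T$ contains no full arithmetic progression of common difference $n/p_i$. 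You instead take $T^c=\{0,1,\dots,d^*-1\}$ to be an initial interval of length the largest proper divisor $d^*$ of $n$, and the verification collapses to the observation that a $d$-periodic vector vanishing on $\{0,\dots,d-1\}$ is zero (using that $d\mid n$, so the shift-by-$d$ orbits partition the index set and each meets $\{0,\dots,d-1\}$). This buys you three things: no reduction to squarefree $n$, no CRT construction, and a single argument covering all proper subfields rather than only the maximal ones. You also state the easy upper bound $\dim_{\Bbb{F}_q}W\leq n-d^*$ explicitly, which the paper leaves implicit (it is the first line of the proof of Lemma \ref{infinite field}). I see no gap; your observation that Lemma \ref{infinite field} is inapplicable here for finite $F$ is also the correct motivation for why a bespoke construction is needed.
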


\begin{proof}
Let $p_1<\dots<p_s$ be the prime factors of $n$. The maximal subfields of the extension $\Bbb{F}_{q^n}/\Bbb{F}_q$
are $\Bbb{F}_{q^{n/p_1}},\dots,\Bbb{F}_{q^{n/p_s}}$. The first one has the largest possible degree over $\Bbb{F}_q$ which is $\frac{n}{p_1}$. The goal is to come up with an $\Bbb{F}_q$-subspace $W$ of $\Bbb{F}_{q^n}$ whose dimension is $n-\frac{n}{p_1}$ and intersects each of $\Bbb{F}_{q^{n/p_1}},\dots,\Bbb{F}_{q^{n/p_s}}$ trivially. 

First, notice that by replacing $\Bbb{F}_q$ with the intersection $\bigcap_{i=1}^s\Bbb{F}_{q^{n/p_i}}=\Bbb{F}_{q^{n/p_1\dots p_s}}$ of maximal subfields we can assume that $n$ is a product of primes, say $n=p_1\dots p_s$ where $p_1<\dots<p_s$ as before. The second step is to apply the normal basis theorem: There is an element $\theta\in \Bbb{F}_{q^n}$ for which 
\begin{equation}\label{basis}
\{\sigma^j(\theta)\}_{j=1}^{n=p_1\dots p_s}
\end{equation}
is a basis for $\Bbb{F}_{q^n}$ as a vector space over $\Bbb{F}_q$. Here, $\sigma:x\mapsto x^q$ is the Frobenius element, the generator of 
\begin{equation}\label{Galois}
{\rm{Gal}}(\Bbb{F}_{q^n}/\Bbb{F}_q)\cong\Bbb{Z}/n\Bbb{Z}\cong \Bbb{Z}/p_1\Bbb{Z}\times\dots\times\Bbb{Z}/p_s\Bbb{Z}. 
\end{equation} 
Our strategy is to construct $W$ as the subspace spanned by a subset $\{\sigma^j(\theta)\}_{j\in T}$ of the basis in \eqref{basis}  where $T$ is an appropriate subset of $\Bbb{Z}/n\Bbb{Z}$ of size $n-\frac{n}{p_1}$. Besides the cardinality, since we want all intersections $W\cap\Bbb{F}_{q^{n/p_i}}$ to be trivial, we need the following: For any non-zero vector $(c_j)_{j\in T}$ of elements of $\Bbb{F}_q$, the element 
$\sum_{j\in T}c_j\sigma^j(\theta)$ should not belong to any $\Bbb{F}_{q^{n/p_i}}$. But in the Galois correspondence, the latter field corresponds to the subgroup $\langle\sigma^{n/p_i}\rangle\cong\Bbb{Z}/p_i\Bbb{Z}$ of \eqref{Galois}.  
Hence 
\begin{equation}\label{auxiliary6}
\sum_{j\in T}c_j\sigma^j(\theta)\in \Bbb{F}_{q^{n/p_i}}\Leftrightarrow
\sigma^{n/p_i}\left(\sum_{j\in T}c_j\sigma^j(\theta)\right)=\sum_{j\in T}c_j\sigma^j(\theta).    
\end{equation}
But 
$\sigma^{n/p_i}\left(\sum_{j\in T}c_j\sigma^j(\theta)\right)=\sum_{j\in T}c_j\sigma^{j+\frac{n}{p_i}}(\theta)
=\sum_{j\in T+\frac{n}{p_i}}c_{j-\frac{n}{p_i}}\sigma^{j}(\theta)$, 
where the indices $j$ are considered modulo $n=p_1\dots p_s$ (recall that $T\subset\Bbb{Z}/n\Bbb{Z}$). As the elements of the Galois orbit of $\theta$ are linearly independent (i.e. \eqref{basis} is a basis), equating the coefficients in the identity from \eqref{auxiliary6} implies that $c_j=c_{j-\frac{n}{p_i}}$ for any $j\in T$. But clearly $c_k=0$ for $k\notin T$. We deduce that if $c_j\neq 0$ (there exists such a $j$ as otherwise $\sum_{j\in T}c_j\sigma^j(\theta)=0$), then $c_{j-\frac{n}{p_i}}\neq 0$ and thus 
$j-\frac{n}{p_i}\in T$. Continuing this procedure with $j-\frac{n}{p_i}$ in place of $j$, we observe that if the non-zero element $\sum_{j\in T}c_j\sigma^j(\theta)$ of $W=\langle\{\sigma^j(\theta)\}_{j\in T}\rangle$ lies in $\Bbb{F}_{q^{n/p_i}}$, then $T\subset\Bbb{Z}/n\Bbb{Z}$ must contain an (mod $n$) arithmetic progression of the form 
$$
j, j-\frac{n}{p_i}, j-2\frac{n}{p_i}, \dots, j-(p_i-1)\frac{n}{p_i}, j-p_i\frac{n}{p_i}\stackrel{n}{\equiv} j.
$$      
This boils everything down to the additive nature of
$$T\subset\Bbb{Z}/n\Bbb{Z}\cong\Bbb{Z}/p_1\Bbb{Z}\times\dots\times\Bbb{Z}/p_s\Bbb{Z}.$$
To finish the proof, one needs to construct a subset $T$ of $\Bbb{Z}/p_1\Bbb{Z}\times\dots\times\Bbb{Z}/p_s\Bbb{Z}$
of size $n-\frac{n}{p_1}$ with the following property: For each $1\leq i\leq s$, $T$ should not contain any subset of the form 
\begin{equation}\label{auxiliary7}
\{j_1\}\times\dots\times\{j_{i-1}\}\times\Bbb{Z}/p_i\Bbb{Z}\times\{j_{i+1}\}\times\dots\times\{j_s\}
\end{equation}  
where $j_1,\dots,j_{i-1},j_{i+1},\dots, j_s$ are arbitrary integers considered modulo suitable primes. Instead of $T$, we exhibit $T$ through its complement $T^c$, a subset of $\Bbb{Z}/p_1\Bbb{Z}\times\dots\times\Bbb{Z}/p_s\Bbb{Z}$ of size 
$\frac{n}{p_1}=p_2\dots p_s$ which intersects all subsets of the form \eqref{auxiliary7}.  Pick arbitrary surjections 
$$f_2:\Bbb{Z}/p_2\Bbb{Z}\rightarrow\Bbb{Z}/p_1\Bbb{Z},\dots,f_s:\Bbb{Z}/p_s\Bbb{Z}\rightarrow\Bbb{Z}/p_1\Bbb{Z}$$
(recall that $p_2,\dots, p_s$ are larger than $p_1$) and define $T^c$ as
$$
\left\{\left(\sum_{i=2}^sf_i(j_i),j_2,\dots,j_s\right)\,:\, j_2\in\Bbb{Z}/p_2\Bbb{Z},\dots,j_s\in\Bbb{Z}/p_s\Bbb{Z}\right\}.
$$
It is easy to check that this intersects every subset of the form \eqref{auxiliary7}.
\end{proof}

\begin{proof}[Proof of Theorem \ref{primitive}]
Follows from Lemma \ref{infinite field} if $F$ is infinite and from Lemma \ref{finite field} in the case of finite $F$.  
\end{proof}

\subsection{Linear acyclic matchings}
In this final subsection, we shall prove Theorem \ref{linear theorem} after providing a background on \textit{linear acyclic matchings}. Unlike Definition \ref{linear matching} and similar to matching in abelian groups, a linear acyclic matching is indeed a map such as $f:A\rightarrow B$. Here, $A$ and $B$ are vector subspaces of a certain field extension and $f$ is a linear isomorphism.  The definition of linear acyclic matchings, developed in \cite{MR3393940}, builds on the notion of \textit{strong matchings} from \cite{MR2735391}. 
\begin{definition}\label{strong matching}
Let $L/F$ be a field extension and $A$ and $B$ two $F$-subspaces of $L$ which are of the same finite dimension. An $F$-linear isomorphism $f:A\rightarrow B$ is said to be a strong matching if any ordered basis $\mathcal{A}$ of $A$ is matched to the ordered basis $f(\mathcal{A})$ of $B$ in the sense specified in Definition \ref{linear definition}.
\end{definition}
\noindent
It is known that there is a strong matching from $A$ to $B$ if and only if the intersection $A\cap AB$ is trivial, in which case every linear isomorphism between $A$ and $B$ is a strong matching \cite[Theorem 6.3]{MR2735391}. 
In view of the dimension criterion \eqref{intersection criterion}, this is a special situation  because it implies that the subspaces appearing in \eqref{intersection criterion} are all trivial. 

To define linear acyclic matchings, in analogy with Definition \ref{main definition}, one should first make sense of two linear isomorphisms $f,g:A\rightarrow B$ between vector subspaces of a field $L$ having the same ``multiplicity functions''. We want the elements of the multiplicative group $L^\times$ realized as $af(a)$ to be the same as those realized as $ag(a)$. But here $A$ and $B$ are subspaces rather than finite sets. So article  \cite{MR3393940} puts forward the definition below:
\begin{definition}\label{equivalent}
Let $L/F$ be a field extension and $A,B$ be $F$-subspaces of $L$. Two $F$-linear isomorphisms $f,g:A\rightarrow B$ are called to be \textit{equivalent} if there exists a linear automorphism $\phi:A\rightarrow A$
satisfying 
\begin{equation}\label{auxiliary3}
af(a)=\phi(a)g(\phi(a))   
\end{equation}
for every $a\in A$.
\end{definition}
\noindent 
An obvious way of defining an isomorphism $g:A\rightarrow B$ equivalent to a given $f:A\rightarrow B$ is to pick an $r\in F\setminus\{0\}$ and set  $g(a):=\frac{1}{r^2}\,f(a)$ which satisfies \eqref{auxiliary3} if $\phi(a):=ra$. But is there any other way to come up with an isomorphism equivalent to $f$? This brings us to the definition of the linear acyclic matching property from \cite{MR3393940}.  
\begin{definition}\label{linear acyclic}
Let $L/F$ be a field extension, and suppose $A$ and $B$ are $F$-subspaces of $L$ whose dimensions are finite and equal.  A strong matching $f:A\rightarrow B$ is called acyclic if  any other strong matching $g:A\rightarrow B$ equivalent to it is of the form  $cf$ for some $c\in F$. The extension $L/F$ is said to have the linear acyclic matching property if for every pair $A$ and $B$ of $F$-subspaces of $L$ which are of the same finite dimension and satisfy $A\cap AB=\{0\}$, there exists a linear acyclic matching from $A$ to $B$. 
\end{definition}

We next start working towards the proof of Theorem \ref{linear theorem}. Lemma \ref{lemma} below will be used in the subsequent Proposition \ref{Proposition 1} that establishes the ``if'' part of Theorem \ref{linear theorem}. The statements and the proofs of the lemma and the proposition are respectively adapted from \cite[Lemma 4.3]{MR3393940} and \cite[Theorem 4.5]{MR3393940} with slight modifications: The original statements are only concerned with extensions $L/F$ where elements of $L\setminus F$ are transcendental over $F$ -- extensions that \cite{MR3393940,MR2735391} (rather unconventionally) call ``purely transcendental''. We more generally consider extensions that lack non-trivial proper intermediate subfields finite over the base.

\begin{lemma}\label{lemma}
Let $L/F$ be a field  extension without any non-trivial proper finite  intermediate extension $E/F$. Suppose $A$ and $B$ are two $F$-subspaces of $L$ with 
$$0<\dim_FA=\dim_FB<\dim_FL.$$
If two $F$-linear isomorphisms $f,g:A\rightarrow B$ are equivalent via a linear automorphism $\phi:A\rightarrow A$, then either $g=cf$ for a suitable $c\in F\setminus\{0\}$ or  
$g\circ\phi$ is the multiplication map by some $\alpha\in L\setminus\{0\}$ in which case $B=\alpha A$.
\end{lemma}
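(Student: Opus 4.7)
My plan is to extract a rigid algebraic constraint from the equivalence identity and then split on whether $\phi$ is a scalar. Set $\tilde g := g\circ\phi\colon A\to B$, so that the equivalence relation \eqref{auxiliary3} reads $af(a)=\phi(a)\tilde g(a)$ for every $a\in A$. First I would polarize this: applying the identity to $a+a'$, expanding by $F$-linearity of $f$, $\phi$, and $\tilde g$, and subtracting the diagonal identities for $a$ and for $a'$ yields
$$
af(a')+a'f(a)=\phi(a)\tilde g(a')+\phi(a')\tilde g(a)
$$
for all $a,a'\in A$.

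Next, for any nonzero $a\in A$, I would solve the original identity for $\tilde g(a)=f(a)/r(a)$ where $r(a):=\phi(a)/a\in L^{\times}$, and substitute into the polarized form. After clearing denominators the expression collapses to the key relation
$$
\bigl(r(a')-r(a)\bigr)\bigl(\phi(a)f(a')-\phi(a')f(a)\bigr)=0
$$
for all nonzero $a,a'\in A$. A short pigeonhole argument then shows that either $r$ is constant on $A\setminus\{0\}$, or the ratio $s(a):=\phi(a)/f(a)$ is constant there: if $r(a_0)\ne r(a_1)$ for some pair, then $s(a_0)=s(a_1)$, and for any further $a$ the pairs $(a_0,a)$ and $(a_1,a)$ cannot simultaneously lie on the $r$-side of the dichotomy, forcing $s(a)$ to equal this common value.

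In the first case, $\phi$ equals multiplication by some $c\in L$ on $A$. Since $A$ is a finite-dimensional $F$-subspace invariant under multiplication by $c$, a linear dependence among $a,ca,c^{2}a,\dots$ for any nonzero $a\in A$ forces $c$ to be algebraic over $F$, so $F(c)/F$ is a finite intermediate extension. The standing hypothesis on $L/F$ then leaves only $F(c)=F$ or $F(c)=L$. The latter is ruled out by the dimension inequality in the lemma's hypothesis: $F(c)=L$ together with $cA\subseteq A$ would give $L\cdot A\subseteq A$, hence $A=L$, a contradiction. Thus $c\in F$, and unwinding $\tilde g=g\circ\phi$ with $\phi=c\cdot\mathrm{id}_A$ yields $g(a)=f(a)/c^{2}$, so $g=c'f$ with $c'=c^{-2}\in F$.

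In the second case, $\phi(a)=s_0 f(a)$ for some fixed $s_0\in L^{\times}$, and substituting back into $\tilde g(a)=f(a)/r(a)$ collapses to $\tilde g(a)=a/s_0$; that is, $g\circ\phi$ is multiplication by $\alpha:=s_0^{-1}$. Since $\phi$ is an automorphism of $A$, this immediately gives $B=g(A)=\tilde g(A)=\alpha A$, as required. The step I expect to be most delicate is the polarization-and-clearing-denominators manipulation that produces the central dichotomy; both ensuing cases are then short, with the case-A argument ruling out $F(c)=L$ being exactly where the lemma's dimension hypothesis is used.
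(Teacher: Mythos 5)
Your proof is correct and follows essentially the same route as the paper's: polarizing $af(a)=\phi(a)g(\phi(a))$ produces the same vanishing product of two factors, and the two resulting cases are handled exactly as in the paper (algebraicity of the multiplier plus the no-finite-intermediate-extension hypothesis to force $c\in F$ in one case, and identifying $g\circ\phi$ as multiplication by $\alpha$ with $B=\alpha A$ in the other). The only cosmetic difference is that the paper fixes a single $x$ and invokes the fact that $A$ cannot be the union of two proper subspaces, whereas you run a pigeonhole argument over all pairs; both resolutions of the dichotomy are valid.
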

\begin{proof}
Fix a non-zero element $x$ of $A$. Changing $a$ to $x$ and $a+x$ in \eqref{auxiliary3} yields $xf(x)=\phi(x)g(\phi(x))$ and $(a+x)f(a+x)=\phi(a+x)g(\phi(a+x))$ for any arbitrary $a\in A$. Combining these with \eqref{auxiliary3} and using the additivity of $f$, $g$ and $\phi$, one obtains 
\begin{equation}\label{auxiliary4}
(x\phi(a)-a\phi(x))(xg(\phi(a))-ag(\phi(x)))=0
\end{equation}
for all $a\in A$. (See \cite[Proof of Lemma 4.3]{MR3393940} for the details for this computation.) As $L$ is a field, one of the parentheses in \eqref{auxiliary4} should be zero. 
The set of elements $a$ of $A$ that make each parentheses zero is an $F$-subspace. But $A$ cannot be written as a union of two proper subspaces. 
So either $\phi:A\rightarrow A$ is given by multiplication by $r:=\frac{\phi(x)}{x}$, or the  linear isomorphism $g\circ\phi:A\rightarrow B$ is the multiplication map by 
$\alpha:=\frac{g(\phi(x))}{x}$ which implies $B=\alpha A$. We only need to further analyze the former situation. The element $r$ must lie in $F$: The finite-dimensional $F$-subspace $A$ of $L$ is invariant under multiplication by $r\in L$, hence $r$ satisfies a monic equation with coefficients in $F$ and of degree $\dim_FA<\dim_FL$, cf. \cite[Proposition 2.4]{MR0242802}. But then $F(r)$ is a proper subfield of $L$ which is finite over $F$, thus should be the same as $F$ due to our assumption about the extension $L/F$. Now, in view of the $F$-linearity of $f$, $g$ and $\phi$, plugging $\phi(a)=ra$ in \eqref{auxiliary3} implies $g=cf$ where $c:=\frac{1}{r^2}$. 
\end{proof}

The lemma above will be used in the proof of the proposition below which is a slight generalization of  \cite[Theorem 4.5]{MR3393940}.
\begin{proposition}\label{Proposition 1}
A field extension $L/F$ without non-trivial proper finite intermediate extensions of the form $E/F$ has the linear acyclic matching property.    
\end{proposition}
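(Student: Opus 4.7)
The plan is to exploit the dichotomy supplied by Lemma \ref{lemma}. Fix $F$-subspaces $A$ and $B$ of $L$ of the same finite dimension satisfying $A\cap AB=\{0\}$. By \cite[Theorem 6.3]{MR2735391}, this condition already guarantees that every $F$-linear isomorphism $A\to B$ is a strong matching, so the task reduces to producing an isomorphism $f:A\to B$ with the property that any $g:A\to B$ equivalent to $f$ in the sense of Definition \ref{equivalent} is forced to be a scalar multiple $cf$ with $c\in F^\times$. Note first that $A\cap AB=\{0\}$ forces $\dim_F A<\dim_F L$ (otherwise $A=L$ and $AB=L$, contradicting $A\cap AB=\{0\}$), so the dimension hypothesis needed in Lemma \ref{lemma} is automatically met.

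I split the construction of $f$ into two cases according to whether $B$ is a scalar multiple of $A$ in $L$. If no $\alpha\in L^\times$ satisfies $B=\alpha A$, then I take $f$ to be any $F$-linear isomorphism from $A$ to $B$. For any $g$ equivalent to $f$, Lemma \ref{lemma} gives two options, but the second alternative forces $B=\alpha A$ for some $\alpha\in L^\times$, which is ruled out by assumption. Thus $g=cf$, and $f$ is acyclic.

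If instead $B=\alpha A$ for some $\alpha\in L^\times$, then I take $f$ to be multiplication by $\alpha$. Let $g$ be equivalent to $f$ via an automorphism $\phi$ of $A$. The second alternative of Lemma \ref{lemma} now reads $g(\phi(a))=\beta a$ for some $\beta\in L^\times$. Plugging $f(a)=\alpha a$ and $g(\phi(a))=\beta a$ into the equivalence identity $af(a)=\phi(a)\,g(\phi(a))$ gives $\alpha a^2=\beta a\,\phi(a)$, so $\phi$ acts on $A$ as multiplication by the scalar $r:=\alpha/\beta$. Since $A$ is a non-zero finite-dimensional $F$-subspace of $L$ invariant under multiplication by $r$, the element $r$ satisfies a monic polynomial of degree at most $\dim_F A$ over $F$; hence $F(r)/F$ is a finite intermediate extension of $L/F$, and the hypothesis on $L/F$ forces $r\in F$. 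A direct computation then yields $g(a)=\beta\,\phi^{-1}(a)=(\beta/r)\,a=r^{-2}f(a)$, so $g=cf$ with $c=r^{-2}\in F^\times$, confirming that $f$ is acyclic.

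The main obstacle is the second case: everything hinges on choosing $f$ to be exactly the multiplication-by-$\alpha$ map, since Lemma \ref{lemma}'s second alternative then forces the auxiliary automorphism $\phi$ of $A$ to act as a single element of $L^\times$, and the field-extension hypothesis pins that element down to $F$. A more arbitrary choice of $f$ would not exhibit this kind of rigidity, so the key move is to recognize that when $B$ is a scalar translate of $A$ there is a canonical matching forced upon us.
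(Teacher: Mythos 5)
Your proof is correct and follows essentially the same route as the paper: both arguments rest entirely on the dichotomy in Lemma \ref{lemma}, and both resolve the problematic case $B=\alpha A$ by choosing $f$ to be multiplication by $\alpha$ and then using the bound $[F(\alpha/\beta):F]\leq\dim_F A<\dim_F L$ together with the hypothesis on intermediate fields to force the scalar into $F$. The only difference is organizational — you split into cases on whether $B$ is a scalar translate of $A$ up front, whereas the paper starts with an arbitrary $f$ and switches to the multiplication map only if that $f$ fails to be acyclic — which is cosmetic.
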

\begin{proof}
Let $A$ and $B$ be as in Definition \ref{linear acyclic}: two $F$-subspaces of $L$ of the same finite dimension satisfying $A\cap AB=\{0\}$.  The goal is to show the existence of an $F$-linear isomorphism $f:A\rightarrow B$ which is acyclic in the sense any other isomorphism $g:A\rightarrow B$ equivalent to it can be written as $cf$ for an appropriate $c\in F$. There is nothing to prove if $A=B=\{0\}$. Moreover, $A$ and $B$ are proper since $A\cap AB=\{0\}$ implies $1\notin B$. So one can safely assume that 
$$0<\dim_FA=\dim_FB<\dim_FL$$
as in Lemma \ref{lemma}. Pick an arbitrary isomorphism $f:A\rightarrow B$. If it is acyclic, we are done. Otherwise, the lemma implies that $B=\alpha A$ for some $\alpha\in L\setminus\{0\}$. We claim that the $F$-linear isomorphism 
$$
\tilde{f}:A\rightarrow B=\alpha A:a\mapsto\alpha a
$$
given by multiplication by $\alpha$ is acyclic. If not, there exists another isomorphism $\tilde{g}:A\rightarrow B=\alpha A$ which is not in the form of $c\tilde{f}$ for any $c\in F$ 
but is equivalent to $\tilde{f}$ via an automorphism $\phi:A\rightarrow A$ satisfying 
\begin{equation}\label{auxiliary8}
a(\alpha a)=a\tilde{f}(a)=\phi(a)\tilde{g}(\phi(a))    
\end{equation}
for all $a\in A$.
Invoking Lemma \ref{lemma} once again, there exists $\beta\in L\setminus\{0\}$ such that $B$ can also be written as $\beta A$,  and $\tilde{g}\circ\phi$ is the multiplication map by $\beta$. 
Substituting in \eqref{auxiliary8}, we deduce that $\phi$ is the multiplication map by $\beta^{-1}\alpha$. But, repeating the argument used in the proof of Lemma \ref{lemma},  the element $\beta^{-1}\alpha$ must lie in $F$ due to our assumption on $L/F$ because 
$\alpha A=\beta A$ implies that 
$$[F(\beta^{-1}\alpha):F]\leq \dim_FA<\dim_FL$$
(cf. \cite[Proposition 2.4]{MR0242802}). Plugging $\phi(a)=(\beta^{-1}\alpha)a$ in \eqref{auxiliary8}, the $F$-linearity of $\tilde{g}$ yields $\tilde{g}=(\beta^{-1}\alpha)^{-2}\tilde{f}$. This is a contradiction since we assumed that $\tilde{g}\neq c\tilde{f}$ for all $c\in F$.  
\end{proof}

We next turn into the ``only if'' part of Theorem \ref{linear theorem}.
\begin{proposition}\label{Proposition 2}
Let $L/F$ be a field extension admitting an intermediate subfield $F\subsetneq E\subsetneq L$ with $[E:F]<\infty$. Then $L/F$ does not satisfy the linear acyclic matching property. 
\end{proposition}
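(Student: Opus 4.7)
To prove Proposition \ref{Proposition 2}, the plan is to construct an explicit counterexample: a pair of $F$-subspaces $A,B \subseteq L$ with $\dim_F A = \dim_F B$ and $A \cap AB = \{0\}$ such that no $F$-linear isomorphism $f: A \to B$ is acyclic, thereby violating the linear acyclic matching property. The finite intermediate subfield $E$ is crucial in supplying non-trivial equivalences among strong matchings.

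The essential input is an element $r \in E \setminus F$ with $r^2 \notin F$; such an $r$ exists because $F \subsetneq E$ (for instance, in characteristic $\ne 2$ one may take $r = 1 + \theta$ for any $\theta \in E \setminus F$, since $r^2 = 1 + 2\theta + \theta^2 \notin F$ unless $\theta\in F$; a minor modification handles characteristic $2$). Such an $r$ is precisely what the hypothesis of Lemma \ref{lemma} excluded: there, the ratio $\phi(x)/x$ arising in the proof was forced into $F$ because no finite proper intermediate extension existed. Here the ratio can genuinely lie in $E \setminus F$, and multiplication by $r$ furnishes a non-scalar $F$-linear automorphism of any $E$-invariant $F$-subspace of $L$.

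The natural candidate is $A := E$ and $B := \alpha E$ for some $\alpha \in L \setminus E$: one checks $\dim_F A = \dim_F B = [E:F]$ and $A \cap AB = E \cap \alpha E = \{0\}$. Setting $\phi: a \mapsto ra$ (an $F$-linear automorphism of $A$ which also preserves $B$), an arbitrary $F$-linear isomorphism $f: A \to B$ can be written $f(a) = \alpha \tilde{f}(a)$ for an $F$-linear automorphism $\tilde f$ of $E$, and $g(a) := \alpha \tilde f(a/r)/r$ is an $F$-linear isomorphism $A \to B$ equivalent to $f$ via $\phi$, since $a f(a) = \phi(a)\, g(\phi(a))$. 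The condition $g = c f$ for $c \in F$ translates into $\tilde f(a/r) = c r\, \tilde f(a)$ for all $a \in E$; imposing this identity for \emph{every} $r \in E^\times$ forces $\tilde f$ to be multiplicative and hence an $F$-algebra automorphism of $E$. Thus every $f$ whose twist $\tilde f$ is not an $F$-algebra automorphism is automatically non-acyclic; in particular, the multiplication-by-$\alpha$ isomorphism admits the non-proportional equivalent $g(a) = \alpha a / r^2$.

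The main obstacle is to dispose of the residual ``Galois-twisted'' isomorphisms, namely those $f(a) = \alpha \tilde f(a)$ with $\tilde f$ a non-trivial $F$-algebra automorphism of $E$ (which occur when $E/F$ has non-trivial $F$-algebra automorphisms). For such $f$, the choice $\phi: a \mapsto ra$ produces only $g = c f$ with $c = 1/(r \tilde f(r)) \in F$, since $r \tilde f(r)$ lies in $F$ as a ``norm''. The plan is to refine the choice of $B$ so as to preclude these algebra-automorphism isomorphisms altogether: replace $B = \alpha E$ by an $F$-subspace $B'$ of dimension $[E:F]$ that is not of the form $\eta E'$ for any $\eta \in L^\times$ and any subfield $E' \subseteq L$ with $E' \cong_F E$, while preserving both $A \cap A B' = \{0\}$ and sufficient invariance under multiplication by $r$ to keep the $\phi$-construction valid. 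If $f$ were of algebra-embedding form, then $B'$ would equal $f(1)\tilde f(E) = \eta E'$, contradicting the choice of $B'$; hence every $f: A \to B'$ falls into the generic case and is non-acyclic. The detailed construction of such a $B'$ is case-dependent on the structure of $L/F$ (notably, on whether $L$ contains transcendental elements over $E$, or is finite over $F$ with a suitable primitive element) and adapts the strategy of \cite[Theorem 4.5]{MR3393940} to the present more general situation.
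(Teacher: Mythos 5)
Your construction coincides, up to notation, with the paper's own proof of Proposition \ref{Proposition 2}: the paper likewise takes $A=E$, $B=\alpha E$, sets $g(a)=\beta^{-1}f(a/\beta)$ for $\beta\in E\setminus F$ via $\phi(a)=\beta a$, and then asserts outright that $g$ cannot equal $cf$ with $c\in F$. The obstruction you flag is therefore genuine, and you deserve credit for seeing it: if $E/F$ is quadratic Galois with nontrivial automorphism $\sigma$ and $f=\alpha\sigma$, then $g=N_{E/F}(\beta)^{-1}f$ with the norm $N_{E/F}(\beta)=\beta\sigma(\beta)\in F$; moreover, running the $V_x\cup W_x$ dichotomy from the proof of Lemma \ref{lemma} (the derivation of \eqref{auxiliary4} uses no hypothesis on intermediate fields) one checks that \emph{every} $g$ equivalent to $\alpha\sigma$ is an $F$-multiple of it, so $\alpha\sigma$ is acyclic and the pair $(E,\alpha E)$ fails to witness the failure of the property in this case. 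Nor can one always dodge this by choosing a different $E$: in $L=\mathbb{Q}(\sqrt2)(t)$ over $F=\mathbb{Q}$ the only nontrivial finite intermediate field is the Galois quadratic $\mathbb{Q}(\sqrt2)$.

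That said, your proposal does not close the gap it identifies. The entire proof is made to rest on the existence of the modified subspace $B'$ (correct dimension, $A\cap AB'=\{0\}$, not of the form $\eta E'$ for any $F$-embedded copy $E'$ of $E$, yet still admitting the $\phi(a)=ra$ equivalences), and you explicitly leave that construction ``case-dependent'' and unexecuted; note also that once $B'$ is no longer $\alpha E$, your reduction of the problematic $f$'s to algebra automorphisms -- which wrote $f=\alpha\tilde f$ with $\tilde f:E\to E$ -- no longer applies verbatim, so non-acyclicity of an arbitrary $f:A\to B'$ would have to be re-established from scratch. One further correction: the claim that $r\tilde f(r)\in F$ ``as a norm'' is false once $[E:F]\geq 3$. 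Indeed, if $r\tilde f(r)$ and $r+\tilde f(r)$ both lie in $F$ for every $r\in E$, then every element of $E$ has degree at most $2$ over $F$; hence for $[E:F]\geq 3$ one can choose $r$ with $r\tilde f(r)\notin F$, the corresponding $g_r=(r\tilde f(r))^{-1}f$ is equivalent to $f$ but not an $F$-multiple of it, and the original construction already succeeds. The unresolved residue is thus confined to $[E:F]=2$ (together with the characteristic-$2$ purely inseparable corner where $r^2\in F$ for all $r\in E$, which also defeats your choice of $r$). This narrows the gap considerably, but it does not fill it.
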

\begin{proof}
Motivated by Lemma \ref{lemma}, pick an element $\alpha\in L\setminus E$ and set $A$ and $B$ to be $E$ and $\alpha E$ respectively. Then $A$ and $B$ are finite-dimensional $F$-subspaces satisfying 
$A\cap (AB)=E\cap(\alpha E)=\{0\}.$
Hence every $F$-linear isomorphism $f:A\rightarrow B$ is a strong matching according to \cite[Theorem 6.3]{MR2735391}. 
We claim that there always exists another $F$-linear isomorphism $g:A\rightarrow B$ which is equivalent to $f$ but cannot be written as $cf$. 
Define $g$ as 
$g(a):=\frac{1}{\beta}\,f\left(\frac{a}{\beta}\right)$
where $\beta\in E\setminus F$. This clearly is another $F$-linear isomorphism from $A=E$ onto $B=\alpha E$; and is furthermore equivalent to $f$ because the $F$-linear automorphism $\phi(a):=\beta a$ of $A$ satisfies 
$af(a)=\phi(a)g(\phi(a))$ for all $a\in A$. 
But $g$ is not in the form of $cf$ for any $c\in F$. Otherwise: $\frac{1}{\beta}\,f\left(\frac{a}{\beta}\right)=cf(a)$. Since $f$ takes its values in $\alpha E$ and $E$ is a field containing $F$, 
this requires $\beta$ to lie in $F$, a contradiction.
\end{proof}

\begin{proof}[Proof of Theorem \ref{linear theorem}]
Immediately follows from Propositions \ref{Proposition 1} and  \ref{Proposition 2}.
\end{proof}


\section*{Acknowledgment}
We are deeply grateful to Shmuel Friedland for his constant encouragement, generosity and for many insightful conversations.  We are also grateful to Richard Brualdi and Martin Isaacs for motivating conversations. We would like to thank anonymous referees for their useful comments.

\bibliography{bib}
\bibliographystyle{abbrv}

\end{document}